\newtheorem{theorem}{Theorem}
\newtheorem{lemma}[theorem]{Lemma}
\newtheorem{claim}{Claim}
\newtheorem{question}[theorem]{Question}
\begin{document}
\newcommand{\Addresses}{{
\bigskip
\footnotesize
\medskip

Maria-Romina~Ivan, \textsc{Department of Pure Mathematics and Mathematical Statistics, Centre for Mathematical Sciences, Wilberforce Road, Cambridge, CB3 0WB, UK.}\par\nopagebreak\textit{Email address: }\texttt{mri25@dpmms.cam.ac.uk}

\medskip

Imre~Leader, \textsc{Department of Pure Mathematics and Mathematical Statistics, Centre for Mathematical Sciences, Wilberforce Road, Cambridge, CB3 0WB, UK.}\par\nopagebreak\textit{Email address: }\texttt{i.leader@dpmms.cam.ac.uk}

\medskip

Mark~Walters, \textsc{School of Mathematical Sciences, Queen Mary University of London, London, E1 4NS, UK.}\par\nopagebreak\textit{Email address: }\texttt{m.walters@qmul.ac.uk}
\medskip}}
\pagestyle{fancy}
\fancyhf{}
\fancyhead [LE, RO] {\thepage}
\fancyhead [CE] {MARIA-ROMINA IVAN, IMRE LEADER AND MARK WALTERS}
\fancyhead [CO] {CONSTRUCTIBLE GRAPHS AND PURSUIT}
\renewcommand{\headrulewidth}{0pt}
\renewcommand{\l}{\rule{6em}{1pt}\ }
\title{\Large\textbf{CONSTRUCTIBLE GRAPHS AND PURSUIT}}
\author{MARIA-ROMINA IVAN, IMRE LEADER AND MARK WALTERS}
\date{}
\maketitle
\begin{abstract}
A (finite or infinite) graph is called constructible if it may be obtained recursively 
from the one-point graph by repeatedly adding dominated vertices. In the finite case, 
the constructible graphs are precisely the cop-win graphs, but for infinite 
graphs the situation is not well understood.

One of our aims in this paper is to give a graph that is cop-win but 
not constructible. This is the first known such example. We also show that every
countable ordinal arises as the rank of some constructible graph, answering a
question of Evron, Solomon and Stahl.
In addition, we give a finite constructible graph for
which there is no construction order whose associated domination map is 
a homomorphism, answering a question of Chastand, Laviolette and Polat.

Lehner showed that every constructible graph is a weak cop win (meaning that the cop
can eventually force the robber out of any finite set). Our other main aim is to
investigate how this notion relates to the notion of `locally constructible' (every finite graph is 
contained in a finite constructible subgraph). We show that, under mild extra conditions, 
every locally constructible graph is a weak cop win.
But we also give an example to show that, in general, a locally constructible 
graph need not be a weak cop win.  
Surprisingly, this graph may even be chosen to be locally finite. 
We also give some open problems. 
    
\end{abstract}

\section{Introduction}

The game of cops and robbers is played on a fixed graph $G$, which for the
moment we will assume is finite. The cop picks a vertex to start at, and the
robber then does the same. Then they move alternately, with the cop moving
first: at each turn the player moves to an adjacent vertex or does not move.
The game is won by the cop if he lands on the robber. We say that $G$ is \textit{cop-win}
if the cop has a winning strategy. Needless to say, if the graph is not connected
then this game is a rather trivial robber win, so we assume from now on that all graphs are
connected.

The cop-win graphs were characterised by Nowakowski and Winkler \cite{NW}. 
It is an easy exercise to see that if the graph contains a dominated vertex,
say $x$, then $G$ is cop-win if and only if $G-x$ is cop-win. (Here as usual we say that a vertex
$y$ \textit{dominates} a vertex $x$ if the set of $x$ and all neighbours of $x$ is contained in the set of
$y$ and all neighbours of $y$.) It is also easy to
see that if no vertex is dominated then the robber has a winning strategy, so
that $G$ is not cop-win -- on each turn, the robber moves to a vertex not adjacent to the cop. 
Putting these together, we see that a finite graph $G$ is
cop-win if and only if it is constructible, meaning that it can be built up from the
one-point graph by repeatedly adding dominated vertices. More precisely, we say that
$G$ is \textit{constructible} if there is an ordering of
its vertices, say $x_1,\ldots,x_n$, such that, for every $k>1$, in the graph $G[x_1,\ldots,x_k]$ the
vertex $x_k$ is dominated by $x_i$ for some $i<k$. We often refer to the given ordering
of the vertices as the \textit{construction ordering}, and the map sending $x_k$ to its
dominating $x_i$ as the \textit{domination map} for this ordering. Note that the construction
ordering, and the domination map for a given ordering, are typically not unique.

We mention briefly that there is also the `reverse' notion of a graph being
\textit{dismantleable}, meaning that we may start with the graph and repeatedly remove
dominated vertices and arrive at the one-point graph. This is of course the same as being constructible
(for finite graphs -- it turns out that in the infinite setting this is
not a useful notion, which is why work on cops and robbers in infinite graphs tends
to focus on concepts to do with constructibility). See the book of Bonato and
Nowakowski \cite{BN} for general background and
a wealth of other results in the finite case, where there are many questions about 
the generalisation where there is more than one cop.

Let us now turn to infinite graphs. The game of cops and robbers has the exact same rules
as before. We remark in passing that if the cop does not have a winning strategy then
the robber has one, for example because the game is an open game (see eg. \cite{AK}).

What about constructibility? The right generalisation of the finite situation is
to allow the vertices to be added recursively, in other words along a
well-ordering. So we say that $G$ is \textit{constructible} if there is an ordinal $\beta$ 
such that its vertices may be
listed as the $x_\alpha$, each $\alpha < \beta$, so that for every $\alpha>0$
the vertex $x_\alpha$ is dominated in the induced graph $G[\{x_\gamma : \gamma \leq \alpha \}]$. 
We say that this well-ordering of the vertices is a \textit{construction order},
with \textit{domination map} as before. The \textit{rank} or \textit{construction time} of $G$ is then the least $\beta$ for which
there is a construction order of order-type $\beta$. If the
rank is $\omega$ then we say that that $G$ is \textit{naturally}
constructible.

It is easy to find examples of constructible graphs that are not cop-win. For
example, a one-way infinite path clearly has this property. However, there is a 
related notion of `weak cop win', introduced by Lehner \cite{L} (after earlier work
by Chastand, Laviolette and Polat \cite{CLP}). A graph $G$ is a called a 
\textit{weak cop win} if there is a strategy for the cop that
guarantees that either the cop catches the robber or the robber has to eventually leave (and never return to)  
every finite set -- in other words, for every vertex the robber only visits
that vertex finitely often. In the usual language of infinite graphs, one could say
that the cop either catches the robber or traps him in one end of the graph (although interestingly, as we will see 
later, this intuition is not really correct). 
For example, the one-way infinite path is a weak cop win.

Lehner \cite{L} gave an elegant argument to show that every constructible graph is a weak
cop win. He asked if the converse also holds. This was answered by 
Evron, Solomon and Stahl \cite{ESS}, who gave 
examples to show that,  interestingly, this need not be the case. But none of those
examples are cop-win, only weak cop-win. One of our aims in this paper is to give an example
of a graph that is actually cop-win and yet is not constructible. We also give a variant of this graph which is a weak cop win, with two ends, but where the robber never has to commit to being in one of these ends. This shows that in some sense the notion of
a weak cop win is more subtle than it might appear.

One of the ingredients of our construction is a finite graph that acts as a kind
of `one-way valve'. This graph, that we call $K$, has the property that the
cop can chase the robber out of it, but `only in one direction'. It is by putting
together copies of $K$ in a certain way that we obtain our desired graph.

This method has an unexpected `spin-off'. In all known examples of finite
constructible graphs, the construction order and domination map could be chosen
in such a way that the domination map was a homomorphism (meaning that if $x$ and
$y$ are adjacent then their images are adjacent or equal). Chastand, Laviolette
and Polat \cite{CLP} asked if this is
always the case. By putting together two copies of $K$ in a certain way, we give a simple counterexample.

Before the paper of Evron, Solomon and Stahl \cite{ESS}, there were no known examples of graphs that were
constructible but not naturally constructible. We stress to the reader how
remarkable this lack of examples was: the problem is that when a graph is constructible there seem to
always be `many' ways to construct it, starting from almost anywhere in the graph,
and this seems to lead to a construction in time $\omega$. This relates to the general reason why cops and robbers on infinite graphs is not so well understood: it seems hard to produce graphs that are cop-win but for an `interesting' (non-trivial) reason, and similarly for weak cop wins. Indeed, Lehner \cite{L} proved that
if $G$ is locally finite and constructible then it must be naturally constructible.
Evron, Solomon and Stahl gave examples of graphs whose rank is greater than $\omega$, and indeed they
showed that the set of ranks of constructible graphs are unbounded (in the
countable ordinals).

They were unable to show that every countable ordinal arises as a rank, and they
asked whether or not this holds. We show that this is indeed
the case: our starting-point is again based on building up a graph from copies
of $K$.

Another part of our paper is concerned with a weakening of the notion of
constructibility to `local constructibility'. Returning to weak cop wins,
one would naturally imagine that the following
generalisation of Lehner's result holds: any graph that is \textit{locally constructible}
(meaning that every finite set is contained in a finite constructible set) should
be a weak cop win. This should especially be true in the locally finite case.
The intuition is that the cop can force the robber out of any finite set using
the finite constructible superset of that finite set -- perhaps with some compactness argument to
make these strategies `consistent' over different finite sets. We
mention in passing that the notion of `locally constructible' is somehow
more tangible that that of `constructible'. For example, it is clear
that we can test whether or not a countable graph is locally
constructible in time $\omega$, whereas we see no way to test for
constructibility even in any (countable) ordinal time.

We are able to 
prove this generalisation under a small strengthening of local constructibility: any graph that is `consistently' locally
constructible (which we define below) is indeed a weak cop win. Remarkably, though, some such condition is
indeed necessary: our final example is a locally constructible graph
that is not a weak cop win. In fact, this graph can even be taken to be locally finite as well. These are by far the most delicate and involved constructions in the paper.

The plan of the paper is as follows. In Section~2 we introduce the
graph $K$, and as a `warm-up' we use this to build a finite graph
that is constructible but for which no construction order has a 
domination map that is a homomorphism. Although this result is not one
of the main ones of the paper, we prove it here so as to get the reader
used to the properties of $K$. In Section~3 we exhibit a graph that is
cop-win but not constructible. Then in Section~4 we turn to locally
constructible graphs, showing that a consistently locally constructible
graph (whether or not locally finite) must be weak cop-win. Section~5 contains our
examples of locally constructible graphs that are not weak cop-win.
We return to general constructibility in Section~6, where we find
graphs whose ranks are any given countable ordinal. 
We conclude in Section~7 with several open problems.

For general background on cops and robbers, see \cite{BN}. For results
particularly dealing with infinite graphs, see (apart from the papers mentioned above)
Bonato, Golovach, Hahn and Kratochvil \cite{BGHK} for results about capture times, 
Polat \cite{Polat1}\cite{Polat2}\cite{Polat3} for material about dismantleability and related aspects, and 
Hahn, Laviolette, Sauer and Woodrow \cite{HLSW} for other structural properties.
For some very attractive results on
the computability aspects of constructibility and pursuit see Stahl \cite{S}.

Our notation is standard. Our graphs are undirected and loopless. For a subset $U$ of the vertices of a graph $G$, we write $G[U]$ for the graph induced by $U$. For two vertices $x$ and 
$y$ we sometimes write $x \sim y$ if $x$ and $y$ are either adjacent or equal. 
We often talk informally about vertices being `added' in a construction order, or `removed' for dismantling. The 
chosen vertex that dominates a vertex $x$ in a construction order (in other words, the image of $x$ under the domination
map) is often referred to as the `parent' of $x$. Finally, for a 
constructible graph with given construction order and given domination map $\delta$, the \textit{trail} of a vertex $x$ is the (necessarily finite) sequence $x, \delta(x), 
\delta(\delta(x)), \ldots$ that starts at $x$ and terminates at the root
(the initial vertex) of the construction order.

\section{The graph $K$ and and a finite application}

\par In this section we introduce a finite constructible graph that is going to be pivotal for our later constructions. We call this graph $K$, pictured below. Note that $x$ is the unique dominated vertex and $y$ is its unique dominating vertex. In particular, in any construction ordering the vertex $x$ must come last.
\begin{figure}[h]
    \centering
    \includegraphics[width=10cm]{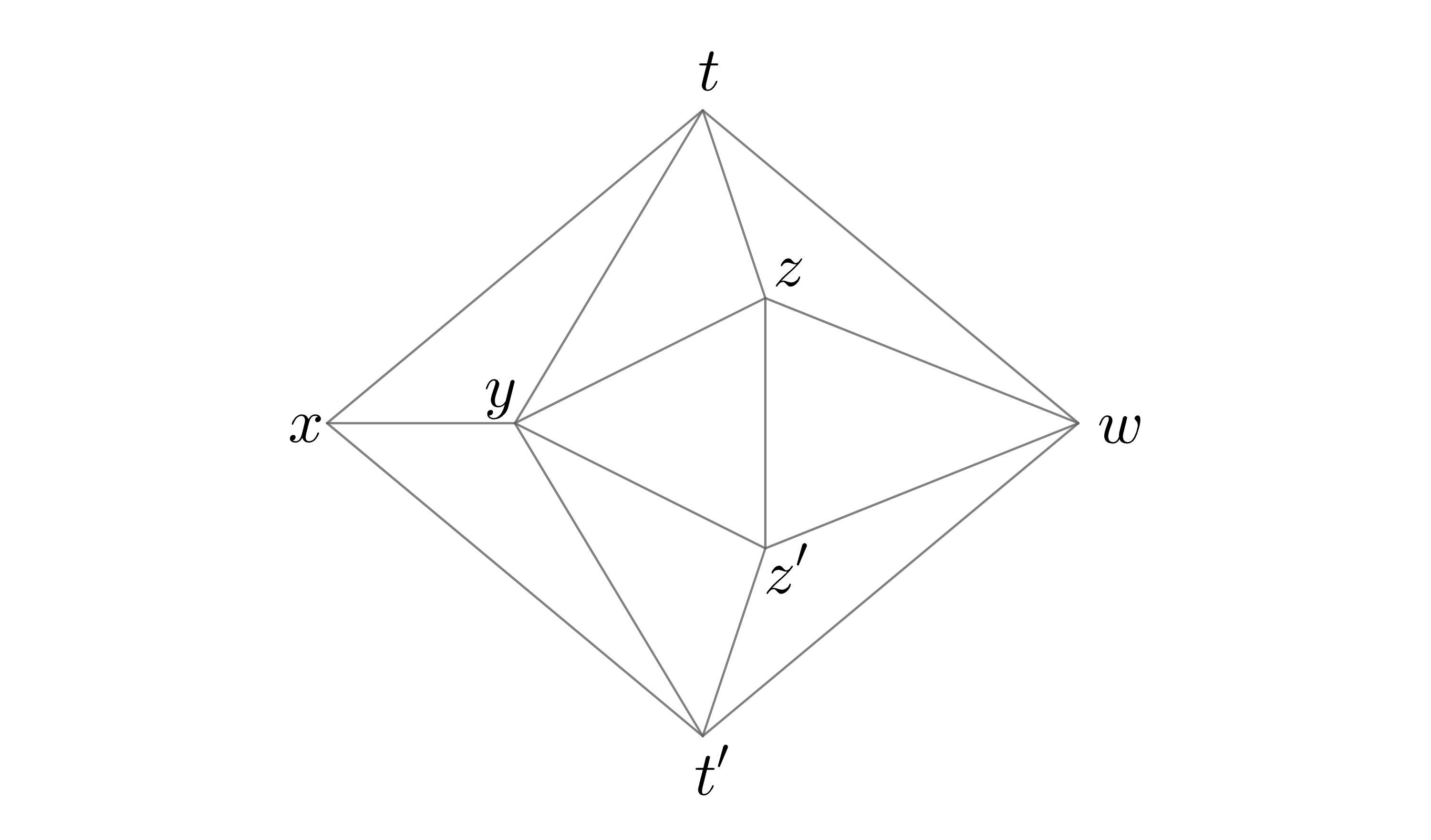}
    \caption{The graph $K$.}
    \label{fig:graph_K}
\end{figure}
To see that $K$ is constructible, or equivalently dismantleable, we observe that the vertex $x$ is dominated by $y$. Once $x$ is removed $t$ and $t'$ are dominated by $z$ and $z'$ respectively. Once they are removed, $z$ is joined to everything so it dominates all remaining vertices. Thus the graph is dismantleable.\par We note that from any vertex in the graph the robber can guarantee to either get to $x$ without being caught or to survive forever. For example, if the robber is at $w$, he waits until the cop comes to one of
$t,z,t',z'$. If the cop is at $t$ or $z$ the robber goes to $t'$, and if the cop is at $t'$ or $z'$ the robber goes to $t$. After that he either stays at $t$, goes back to $w$ or goes to $x$. (Alternatively, as the robber can obviously avoid being caught whenever he is
at a non-dominated vertex, it follows that he can only be caught at $x$.)
\par The following lemma is one of the main results about $K$ which we use in our constructions.
\begin{lemma} Let $G$ be a constructible graph that has $K$ as an induced subgraph. Moreover, let all the edges between $K$ and $G\setminus K$ have their $K$-end at one of $x$ and $y$. Then, in any construction order, $x$ must be the last vertex of $K$ added, and its parent must be $y$.  \label{help}\end{lemma}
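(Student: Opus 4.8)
The plan is to transfer everything to the intrinsic structure of $K$ by exploiting the hypothesis on the crossing edges. The key preliminary observation is that every vertex $v$ of $K$ other than $x$ and $y$ has all of its neighbours inside $K$: since the only $K$-endpoints of edges leaving $K$ are $x$ and $y$, such a $v$ satisfies $N_G(v)=N_K(v)\subseteq V(K)$. Consequently, if at some stage of a construction order of $G$ (with all of $K$ already present) such a $v$ is dominated by its parent $p$, then $p\sim v$, and since $v\notin\{x,y\}$ this forces $p\in K$ — a vertex outside $K$ can only be adjacent to $x$ or $y$ within $K$. Because $N_G[v]\subseteq V(K)$, the domination in $G$ then restricts to genuine domination of $v$ inside the full graph $K$. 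So I would first record this reduction: any interior vertex of $K$ that is dominated in $G$ when all of $K$ is present is in fact dominated inside $K$ itself.

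Next I would show that $x$ is the last vertex of $K$ to be added. Let $m$ be the last vertex of $K$ in the given construction order of $G$, so that all of $K$ is present when $m$ is added and $m$ is dominated by some parent. If $m\notin\{x,y\}$, the reduction above shows $m$ is dominated inside $K$, and since $x$ is the unique dominated vertex of $K$ this gives $m=x$, as required. The remaining case $m=y$ I would rule out directly: its parent $p$ satisfies $p\sim y$ and must cover all $K$-neighbours of $y$; if $p\in K$ then $y$ is dominated inside $K$, contradicting the uniqueness of $x$, while if $p\notin K$ then $p$ meets $K$ only in $\{x,y\}$ and so cannot be adjacent to a neighbour of $y$ lying in $K\setminus\{x,y\}$ (such a neighbour exists, e.g.\ $z$). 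Hence $m=x$.

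Finally, with $x$ the last vertex of $K$ added (so all of $K$ is present), I would pin down its parent $\delta(x)$ by the same dichotomy. We have $\delta(x)\sim x$ and $N_G[x]\subseteq N[\delta(x)]$, so $\delta(x)$ covers every $K$-neighbour of $x$; as $x$ has neighbours in $K\setminus\{x,y\}$ (namely $t$ and $t'$), a vertex outside $K$ cannot serve, forcing $\delta(x)\in K$. Then $\delta(x)$ dominates $x$ inside the full graph $K$, and by the uniqueness of the dominating vertex of $x$ we conclude $\delta(x)=y$. The only real subtlety — and the step I would be most careful about — is the case $m=y$, where one must argue via the crossing-edge hypothesis rather than via intrinsic domination in $K$; everything else is a clean application of the fact that $x$ is the unique dominated vertex of $K$ together with the observation that the interior vertices of $K$ keep all their neighbours inside $K$.
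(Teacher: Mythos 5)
Your proof is correct and takes essentially the same route as the paper's: both arguments rest on the facts that $x$ is the unique dominated vertex of $K$ (with unique dominator $y$) and that every vertex of $K$ has a neighbour in $K\setminus\{x,y\}$, to which no vertex outside $K$ can be adjacent. The only difference is presentational — the paper handles every candidate $v\neq x$ in a single stroke, whereas you split off the case $m=y$ separately; that split is harmless but unnecessary, since the uniform argument already covers $y$.
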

\begin{proof}Suppose that $v\not=x$ is the last vertex of $K$ added. Then $v$, at this point in the construction, must be dominated by some vertex already added, either in $K$ or the part of $G$ so far
constructed. However, since $v\neq x$, $v$ is not dominated by any vertex in $K$, and $v$ has neighbours in $K\setminus \{x,y\}$, so it is not dominated by any vertex outside $K$. 

Therefore the last vertex of $K$ added must be $x$. Since $x$ and $t$ are adjacent, the parent of $x$ cannot be outside $K$, and so its parent must be
$y$.
\end{proof}

To see how these properties of $K$ may be used, we give a simple example of a (finite) constructible graph in which the domination order
cannot be chosen to be a homomorphism.

\begin{theorem}\label{t:no-hom} There exists a finite constructible graph for which no domination map is a homomorphism.
\end{theorem}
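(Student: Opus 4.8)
The plan is to build a graph by gluing two copies of $K$ together along their distinguished vertices, in such a way that Lemma~\ref{help} forces the construction order, and then to show that this forced order produces a conflict that no homomorphism can resolve.

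Here is the construction I would try. Take two disjoint copies of $K$, call them $K_1$ and $K_2$, with distinguished vertices $x_1,y_1$ and $x_2,y_2$ respectively. I would then identify or join these copies so that all cross edges between each $K_i$ and the rest of the graph have their $K_i$-end at $x_i$ or $y_i$, ensuring Lemma~\ref{help} applies to both copies. The natural first attempt is to add a few connecting vertices (or identify $x_1$ with $y_2$ and $x_2$ with $y_1$, or something similar) so that the whole graph is still constructible --- one checks constructibility directly by exhibiting a dismantling order --- while the interlocking of the two copies creates the obstruction. By Lemma~\ref{help}, in \emph{any} construction order $x_1$ is added last within $K_1$ with parent $y_1$, and likewise $x_2$ is added last within $K_2$ with parent $y_2$. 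So the domination map $\delta$ is forced to satisfy $\delta(x_1)=y_1$ and $\delta(x_2)=y_2$ regardless of how the order is chosen.

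The heart of the argument is then to exhibit an edge $uv$ of the graph on which any such forced map fails to be a homomorphism, i.e.\ where $\delta(u)$ and $\delta(v)$ are neither adjacent nor equal. The idea is to arrange the connecting structure so that one of the forced images, say $y_1 = \delta(x_1)$, is adjacent (via an edge of the glued graph) to some vertex $w$ whose own parent is forced to be far from $y_1$ in $K_1$ --- recall that inside $K$ the vertex $y$ is adjacent to many vertices but not all, so there is room to place $w$ adjacent to $x_1$ but with $\delta(w)$ non-adjacent to $y_1$. Concretely I would pick the gluing so that some vertex adjacent to $x_1$ is forced to have its parent on the ``wrong side'', and then $\delta$ maps an edge to a non-edge. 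The clean way to present this is: list the constraints that Lemma~\ref{help} imposes on $\delta$, note that these constraints are order-independent, and then verify by inspection of the finite graph that these constraints alone already violate the homomorphism condition on one specific edge.

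The main obstacle I anticipate is the delicate bookkeeping of the cross edges: one must simultaneously (a) keep all cross edges anchored at the $x_i$ and $y_i$ so that Lemma~\ref{help} genuinely applies to \emph{each} copy, (b) keep the whole graph constructible by producing an explicit dismantling order, and (c) ensure that the two forced assignments $\delta(x_1)=y_1$, $\delta(x_2)=y_2$ combine with the adjacency structure to force a non-edge image on some edge. Conditions (a) and (c) pull in opposite directions --- restricting cross edges to $x_i,y_i$ limits exactly the edges that could witness the failure --- so the construction must be chosen carefully, and verifying the homomorphism failure will require checking adjacencies in $K$ explicitly (in particular which vertices $y$ is \emph{not} adjacent to). I expect the final graph to be small, so once the right gluing is found the verification is a short finite check rather than a general argument.
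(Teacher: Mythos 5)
Your high-level plan is the paper's: glue two copies of $K$ so that Lemma~\ref{help} applies to each, forcing $\delta(x_1)=y_1$ and $\delta(x_2)=y_2$ in every construction order, and then derive a contradiction with the homomorphism property. The paper's actual construction is one of the options you float, namely identifying $x_1$ with $y_2$ (and nothing else). But there are two genuine gaps. First, you never commit to a gluing, and the choices are not interchangeable: identifying $x_1$ with $y_2$ \emph{and} $x_2$ with $y_1$ creates a parent cycle ($\delta(x_1)=x_2$ and $\delta(x_2)=x_1$), so that graph is not constructible at all and fails your requirement (b); the existence claim is not established until one specific graph is written down and both constructibility and the obstruction are verified for it.

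Second, and more importantly, your proposed mechanism for the contradiction --- that the two forced assignments $\delta(x_1)=y_1$, $\delta(x_2)=y_2$ ``alone already violate the homomorphism condition on one specific edge'' --- does not occur in the paper's graph. There the only edge joining two vertices with forced parents is $x_1x_2$ (recall $x_1=y_2$), and its image $\{y_1,\,y_2=x_1\}$ \emph{is} an edge, so no single edge witnesses the failure directly. The paper instead argues by contradiction from the homomorphism hypothesis: since $\delta(y_2)=y_1$ and the only vertex of $K_2$ adjacent or equal to $y_1$ is $y_2$ itself, a homomorphic $\delta$ would have to send all of $t_2,t_2',z_2,z_2'$ to $y_2$; but then $w_2$ cannot be constructed, because $y_2$ is not adjacent to $w_2$ while all four of those vertices are. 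So the contradiction is with constructibility of $w_2$, not with an edge mapping to a non-edge. (A variant of your direct mechanism can be made to work --- e.g.\ joining $x_1$ to both $x_2$ and $y_2$ by new edges keeps the graph constructible and makes the edge $x_1x_2$ map to the non-adjacent pair $y_1,y_2$ --- but that requires exactly the careful adjacency checking you defer, so as written the proof is incomplete.)
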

\begin{proof} We construct the graph $G$ by taking two disjoint copies of $K$, $K_1$ and $K_2$, and identifying the $x$ of the first with the $y$ of the second. The graph is pictured below.
\begin{figure}[h]
    \centering
    \includegraphics[width=10 cm]{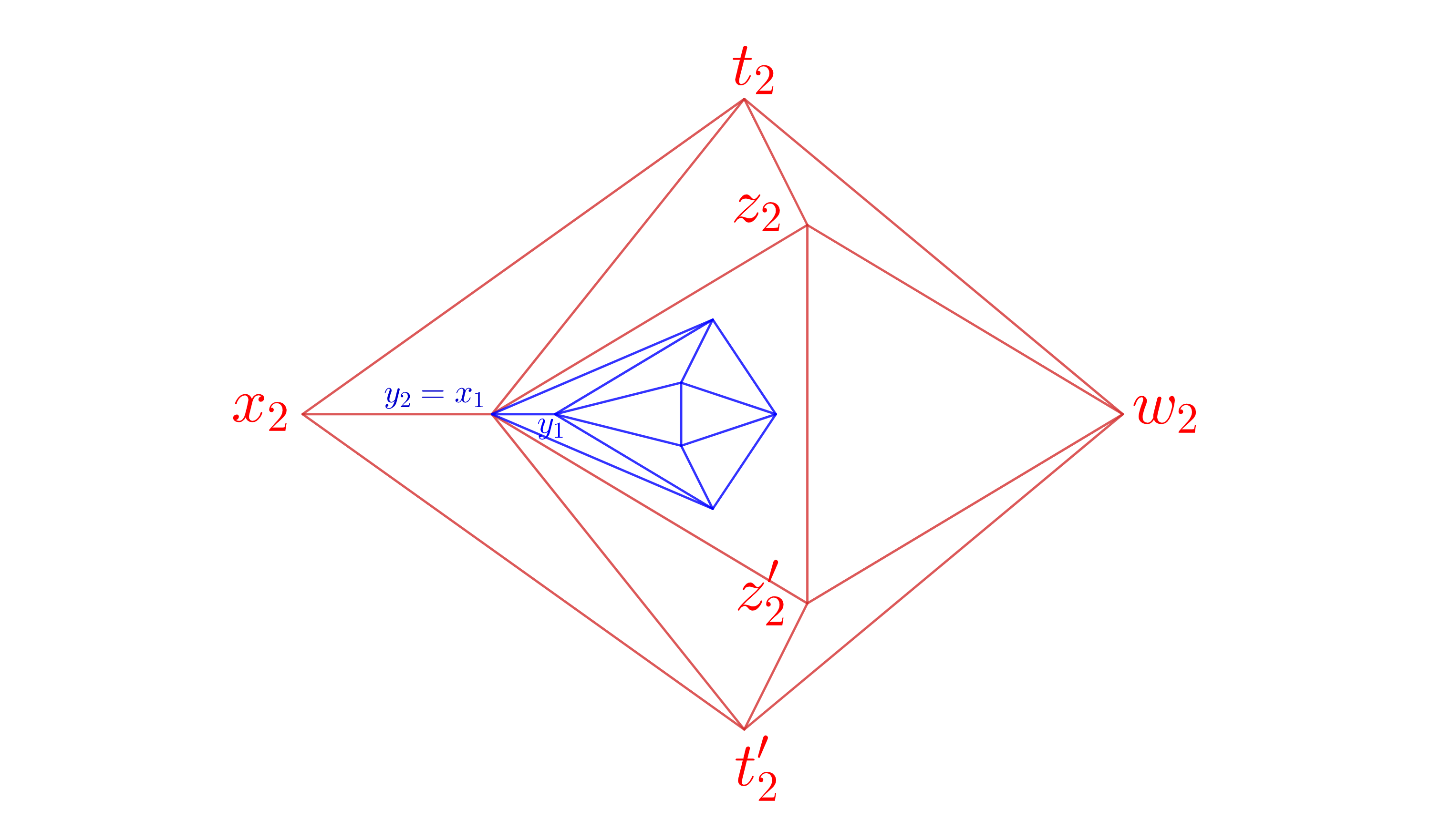}
    \caption{The graph $G$ from Theorem~\ref{t:no-hom} showing the two copies of $K$, one in red and one in blue.}
    \label{fig:no-hom}
\end{figure}

\par First of all, the above graph is constructible. To see this, we prove that it is dismantleable. We can first remove $x_2$ as it is dominated by $x_1=y_2$, then $t_2$ and $t'_2$ as they are dominated by $z_2$ and $z'_2$ respectively. Now we can remove $w_2$ (dominated by $z_2$) and then $z_2$ and $z'_2$. Now we are left with $K_1$ which we know is dismantleable. This finishes the proof that $G$ is constructible.
\par We now show that regardless of construction, the domination map is not a homomorphism. In other words, for any construction order of $G$, there exist two adjacent vertices in $G$ such that their parents cannot be chosen to be adjacent or equal.\par Note first that $x_1=y_2$ must have parent $y_1$: by Lemma \ref{help}, $x_1$ has to be the last vertex added in $K_1$, which implies that its parent must be $y_1$.\par If the domination map were a homomorphism, then all the neighbours of $x_1=y_2$ in $K_2$ would have to have parents that are adjacent to (or equal to) $y_1$, and the only 
possible vertex for this is $y_2$. However, if the vertices $t_2$, $t'_2$, $z_2$ and $z'_2$ all have parent $y_2$, then we cannot construct 
$w_2$: it has to be constructed before the last of these four neighbours is constructed, but all these four neighbors are adjacent to $w_2$, while $y_2$ is not. This shows that, no matter what the construction order is, the domination map cannot be a homomorphism.\end{proof}
\section{A non-constructible cop-win graph}
\par In this section we show that there exists a non-constructible
graph on which the cop can always win, meaning as before that he can always
catch the robber in finite time.
\par We
begin with an infinite sequence of copies of $K$,
$K_1, K_2, \ldots$, where we identify $y_i$ with
$x_{i+1}$. Finally we add a new vertex which we call $0$ and join it
to all $x_i$ and $y_i$. We call this graph, which is pictured below,
$G$. Note that the line of copies of $K$ extends `to the right and not to the left': this will be crucial.
\begin{figure}[h]
    \centering
    \includegraphics[width=10cm]{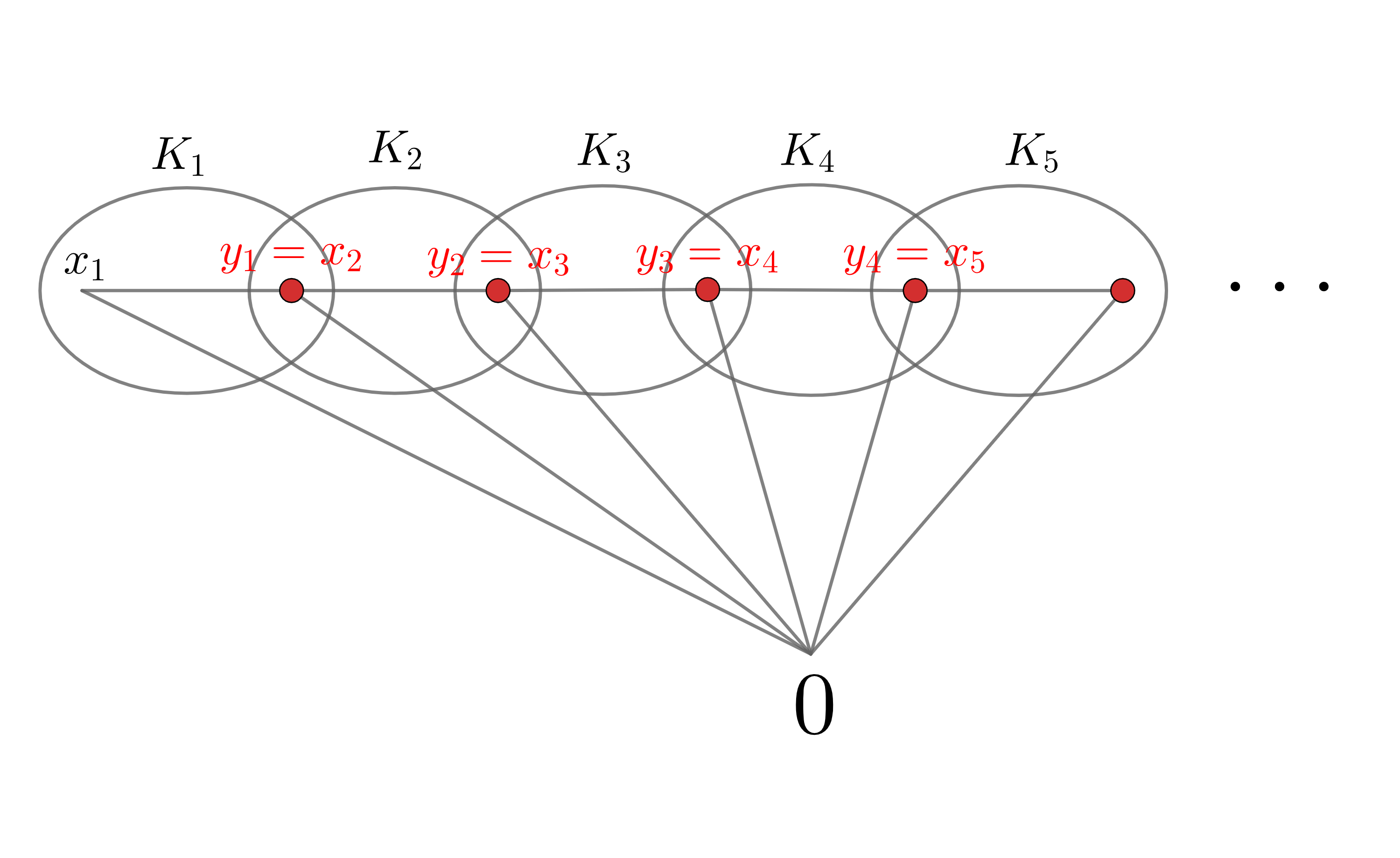}
    \caption{The graph $G$ for Theorem~\ref{t:cop-win}.}
    \label{fig:f:cop-win}
\end{figure}
\begin{theorem}\label{t:cop-win} The graph $G$ is cop-win, but is not constructible.
\end{theorem}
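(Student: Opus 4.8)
The plan is to prove the two halves of the statement separately, since they are quite different in character. For the cop-win half, I would exhibit an explicit winning strategy for the cop. The natural idea, given that the line of copies of $K$ extends only to the right, is to have the cop sit at the special vertex $0$, which is adjacent to every $x_i$ and $y_i$, and use it as a base from which to push the robber rightward. Using the one-way-valve property of $K$ established in Section~2 (the cop can chase the robber out of a copy of $K$, but only in one direction, namely out through $x$), the cop should be able to force the robber, copy by copy, to move into $K_i$, then $K_{i+1}$, and so on, never allowing him to retreat leftward past vertex $0$. The key point to make rigorous is that because vertex $0$ dominates the entire `left boundary' of the structure, the robber can never slip back to the left, so each time the cop clears a copy $K_i$ the robber is permanently confined further to the right. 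The subtlety is that this is an infinite graph, so I must argue that the cop actually wins in \emph{finite} time rather than merely forcing the robber off to infinity: the argument must show that at some finite stage the robber is trapped in a copy of $K$ with no escape route, at which point the one-way-valve property yields capture at the vertex $x$ of that copy.

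For the non-constructibility half, I would argue by contradiction, supposing that $G$ has a construction order along some ordinal $\beta$. The first move is to apply Lemma~\ref{help} to each copy $K_i$. I need to check the hypothesis of that lemma: all edges between $K_i$ and the rest of $G$ should have their $K_i$-end at one of $x_i$ or $y_i$. This holds because the copies are glued only at the identified vertices $y_i = x_{i+1}$, and the extra vertex $0$ is joined only to the $x$'s and $y$'s; so the hypothesis is satisfied, and Lemma~\ref{help} tells us that in $K_i$ the vertex $x_i$ is added last among the vertices of $K_i$ and its parent is $y_i$. In particular, for every $i$ the parent of $x_i$ is $y_i = x_{i+1}$, so each $x_i$ is added strictly after $x_{i+1}$ in the construction order.

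This yields the contradiction. The relation `parent of $x_i$ is $x_{i+1}$' forces $x_i$ to be added \emph{later} than $x_{i+1}$ in the well-ordering, for every $i \geq 1$. Hence $x_1, x_2, x_3, \ldots$ occur in \emph{strictly decreasing} order of construction time: each $x_{i+1}$ precedes $x_i$. This gives an infinite strictly descending sequence in the construction order, which is a well-ordering — a contradiction, since well-orderings admit no infinite descending chains. (Equivalently, one can phrase this via the trail: following the domination map from $x_1$ would have to pass through $x_2, x_3, \ldots$ forever, contradicting the fact that every trail is finite and terminates at the root.) I expect the main obstacle to be the finite-time guarantee in the cop-win half: carefully formalising how the cop uses vertex $0$ together with the valve property of $K$ to trap the robber after finitely many copies, rather than chasing him forever, is where the real work lies. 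The non-constructibility half, by contrast, should follow cleanly once Lemma~\ref{help} is applied to extract the forbidden infinite descending chain.
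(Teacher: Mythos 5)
Your non-constructibility argument is correct and is essentially the paper's own: Lemma~\ref{help} applies to each $K_i$ (and you rightly verify its hypothesis, namely that every edge leaving $K_i$ meets it at $x_i$ or $y_i$), forcing the parent of $x_i$ to be $y_i=x_{i+1}$; the resulting infinite descending chain $x_1, x_2, x_3,\ldots$ in the construction order contradicts well-ordering. That half is fine.

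The cop-win half, however, runs the chase in the wrong direction, and this is not a presentational slip but a fatal one. The valve property of $K$ ejects the robber through $x$, not through $y$: the cop's forced sequence (cop to $y_i$, robber to $w_i$; cop to $z_i$, robber to $t_i'$; cop to $z_i'$, robber to $x_i$) drives the robber out through $x_i$. Since the copies are glued by identifying $y_i$ with $x_{i+1}$, being ejected from $K_i$ through $x_i=y_{i-1}$ lands the robber in $K_{i-1}$. So the cop pushes the robber \emph{leftward}, toward $K_1$, and the chase terminates precisely because the line does not extend to the left: once the robber is forced to $x_1$, every neighbour of $x_1$ is adjacent to $y_1$ (as $y$ dominates $x$ inside $K$, and $0\sim y_1$), so the cop at $y_1$ catches him. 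Your plan instead pushes the robber rightward into $K_{i+1}, K_{i+2},\ldots$; even if the valve could be operated in that direction (it cannot --- there is no way to force the robber out through $y$), such a chase would never terminate, which is exactly the ``finite time'' obstacle you flag but do not overcome. Relatedly, the role of vertex $0$ is not to block leftward retreat: it is to let the cop reach, in two moves, a position adjacent to both exits $x_i$ and $y_i$ of whichever copy the robber occupies, so that the robber is confined to that copy and the forced sequence can begin, and (being adjacent to $y_i$) to ensure the robber cannot slip out via $0$ during the chase.
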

\begin{proof} To show that the graph is not constructible, suppose for a contradiction that we have a construction order for it. 

Now, by Lemma\ref{help} we have that the parent of $x_1$ must be $y_1$. But $y_1=x_2$, and
again by Lemma~\ref{help} the parent of $x_2$ must be $y_2$. So in fact the parent of
$x_i$ is $x_{i+1}$ for all $i$, and this contradicts the fact that the 
construction order is a well-order.

We are left to show that $G$ is a cop-win graph. Whatever the cop's initial position, he can move in at most 2 steps to $0$ (or indeed he may just start at $0$). After the cop reaches $0$, the robber makes his move, and now the robber must be inside some $K_i$. If the robber is at $x_i$ or $y_i$ then he is caught immediately as these vertices are adjacent to $0$. So assume the robber is at some other vertex in $K_i$. Note that the only vertices in $K_i$ with any neighbour outside $K_i$ are $x_i$ and $y_i$, so the robber cannot leave $K_i$ until he reaches one of those two vertices. 
  
Now the cop moves to $y_i$, which will be the start of a chain of forced moves for the robber. Since $y_i$ is adjacent to all vertices in $K_i$ except $w_i$, the robber has to move to $w_i$. Next the cop moves to $z_i$, forcing the robber to $t'_i$. 
Then the cop moves to $z_i'$ forcing the robber to $x_i$. Finally the cop moves to $y_i$, forcing the robber to leave $K_i$  -- and not go to $0$ since $y_i$ and $0$ are adjacent. 
The robber must thus move into $K_{i-1}$, and the cop follows him by moving to $x_i=y_{i-1}$, so that the process can repeat in $K_{i-1}$.

Continuing in this way, the cop forces the robber out of each copy of $K$ in turn until the robber reaches $x_0$, where he cannot avoid getting caught.
\end{proof}

There are some variants of the above graph that have some interesting properties. For example, if we remove the vertex $0$ then we have a rather simple example of a non-constructible graph that is not cop-win, but is weak cop-win. Indeed, if the cop is in a block to the right of the robber, then the cop can force the robber out of each block in turn as we have see above, and the robber gets caught. However, if the cop is on the robber's left, then the cop runs to the right and the only way the robber can avoid being to the left of the cop at some point is by also running to the right.

In terms of ends of graphs (see \cite{RD} for general background), it is natural to assume that in a weak cop win graph the cop can `force a robber into one end', in the sense that the set of possible ends to which the robber's eventual path can belong, after say time $n$, should shrink down to one end as $n$ tends to infinity. But, surprisingly, this
is not the case. Indeed, consider the variant of the above graph $G$ in which we have a two-way infinite line of copies of $K$. This graph has two ends.
But when the cop chases the robber off to the right, then at every time the robber is always free to `change direction' by going past the cop (without being caught) and then
running off to the left. So the set of possible ends remains both ends of the graph, for all time.  

\section{Locally constructible graphs}

We have seen that there are non-constructible graphs that are weak cop wins and even an example that is a cop win. In this section we introduce a weaker notion that captures many of the key properties of constructibility. 

Recall that we call a graph $G$ locally constructible if, for any finite set of vertices $V$, there is a finite set of vertices $U$ containing $V$ such that $G[U]$ is constructible. (We remark that actually one could omit the condition that $U$ is
finite, since if $U$ is infinite then the union of the trails in $U$ of all vertices
in $V$ is a finite constructible graph.)

One motivation for this definition is that it easily implies that the cop can force the robber to leave any finite set of vertices (although the robber may return later). Indeed, given a finite set of vertices $V$, take $U$ as in the definition of locally constructible. If the robber stays on $V$ then he necessarily stays on $U$, and since $G[U]$ is constructible, the standard finite result shows that the cop catches him.

However, this does not show that the game is a weak cop win as nothing in the above argument prevents the robber from returning to the finite set at some later stage. One may naturally feel that some form of compactness argument would yield, at least for locally finite graphs, some way of combining the local strategies from these local constructions into a global strategy. Rather surprisingly, as we shall see in the next section, this is not the case. 

First, though, we prove that the cop does have a weak winning strategy in the locally constructible case with an extra condition, which is that the notion of parent is consistent. More precisely, we say a graph $G$ is \emph{consistently} locally constructible if there is a nested sequence of finite induced subgraphs $G_i$ with $\bigcup_i G_i=G$, vertices $v_i\in G_i$, and maps $\delta_i\colon G_i\setminus\{v_i\}\to G_i$ such that:

\begin{enumerate}
    \item Each $G_i$ is constructible with domination map given by $\delta_i$ and root $v_i$; 
    \item The maps are consistent: if $v\in (G_i\setminus\{v_i\})\cap (G_j \setminus\{v_j\})$ then $\delta_i(v)=\delta_j(v)$.
\end{enumerate}
Note that we do not require that the construction orders are consistent, just that the notion of parent is.

We remark that in our example of a graph that is a weak cop win but not constructible above, the graph is consistently locally constructible in a very natural way: just take any finite block of the $K$s and construct it starting from the right.  

We will need the following finitary result of Isler, Kannan and Khanna \cite{IKK}. We provide a short proof for the reader's convenience. 

\begin{lemma}
Let $G$ be a finite constructible graph. Consider the following cop strategy: he starts at the root, and then on each turn he moves to the maximal vertex on the trail of the robber's current position that he is adjacent to. Then 
\begin{enumerate}
    \item This strategy is well defined: there is always a neighbour of the cop's current position that is on the trail of the robber.
    \item If the robber is at some vertex $v$ and returns there at some later time, then the cop is strictly closer to the robber on the second occasion.
\end{enumerate}
In particular, this strategy is winning for the cop.
\end{lemma}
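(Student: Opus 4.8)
The plan is to read the stated strategy through the \emph{retractions} that constructibility hands us, and then to exhibit a monotone quantity forcing the robber's position to deteriorate. Fix a construction order $x_1,\dots,x_n$ (so $x_1$ is the root), write $\iota(x_k)=k$ for the position in this order, and set $G_m=G[\{x_1,\dots,x_m\}]$. Since $x_{m+1}$ is dominated by its parent $\delta(x_{m+1})\in G_m$, the map $f_m\colon G_{m+1}\to G_m$ fixing $G_m$ and sending $x_{m+1}\mapsto\delta(x_{m+1})$ is a graph homomorphism (this is exactly the content of domination); hence so is each composite $R_m=f_m\circ f_{m+1}\circ\cdots\circ f_{n-1}\colon G\to G_m$, which fixes $G_m$ pointwise. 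Concretely, $R_m(v)$ is the first vertex on the trail $T(v)$ of $v$ lying in $G_m$, that is, the highest ancestor of $v$ whose position is at most $m$. Note that $\iota$ strictly decreases as one moves down any trail, since $\iota(\delta(w))<\iota(w)$.

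For part (i) the key observation is that immediately after any cop move the cop lies on the trail of the robber's \emph{current} vertex, because by definition he moves to a vertex of that trail. So suppose it is the cop's turn, that the robber has just moved from $\rho$ to $\rho'$ (with $\rho'\sim\rho$), and that the cop sits at $c=x_L\in T(\rho)$. Because $\iota$ strictly decreases along $T(\rho)$ and $c$ itself has position $L$, the vertex $c$ is precisely the highest ancestor of $\rho$ of position at most $L$; that is, $R_L(\rho)=c$. Applying the homomorphism $R_L$ to $\rho\sim\rho'$ gives $R_L(\rho')\sim R_L(\rho)=c$, and $R_L(\rho')$, being an ancestor of $\rho'$, lies on $T(\rho')$. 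Thus $c$ always has a neighbour on the robber's trail, which is exactly assertion (i). (The opening move, with the cop at the root $x_1=R_1(\rho_0)$, is the same argument with $L=1$, or simply note that the root has a child on the trail.)

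For part (ii) the natural progress measure is the cop's position in the construction order, recorded at the checkpoints \emph{just after each cop move}: at such a moment the robber stands at some $\rho$ and the cop at $c=\delta^{\psi}(\rho)\in T(\rho)$, and I take the measure to be $\iota(c)$. Since on a fixed trail $\iota$ increases exactly as one climbs towards the robber, a larger value of $\iota(c)$ means a smaller trail-index $\psi$, i.e. the cop is strictly closer; so (ii) will follow once I show that $\iota(c)$ is non-decreasing from one checkpoint to the next (until the capturing move) and that it strictly increases across any genuine excursion in which the robber leaves a vertex and later returns to it. The mechanism is that the maximality built into the cop's rule makes him climb as high as possible: $R_L(\rho')$ is a neighbour of $c$ on $T(\rho')$, so the cop's new vertex is at least as high as $R_L(\rho')$ there, and one checks that when no capture occurs the robber must have moved to a vertex of position exceeding $L$ (else $R_L(\rho')=\rho'\sim c$ forces capture), after which the domination structure -- propagated along $T(\rho')$ -- guarantees that $c$ is in fact adjacent to a vertex of $T(\rho')$ of position at least $L$. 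The main obstacle is precisely this monotonicity, and in particular the \emph{strict} gain over an excursion: a single robber step can relocate the trail substantially, so the two occurrences of the returned-to vertex are compared on the same trail while the intervening estimates live on the various trails visited en route. Controlling this is where one must combine the retractions $R_m$, the fact that the trails of two adjacent vertices share a common suffix (a short induction using the same propagation of domination), and the maximality of the cop's choice.

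Finally, parts (i) and (ii) give the concluding assertion at once. The strategy is well defined by (i), and by (ii) the robber can never return to a vertex without the cop being strictly closer. As $G$ is finite, each vertex can therefore be occupied by the robber only finitely often, so the robber cannot evade the cop forever; hence the strategy is winning for the cop.
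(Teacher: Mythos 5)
Your part (i) is correct, and your retraction formulation is essentially the paper's argument in different clothing: the map $R_L$ sending a vertex to its highest ancestor of position at most $L$ is exactly the paper's ``maximal vertex on the trail with $v'\le u'$'', and the statement that $R_L$ is a homomorphism is the paper's key claim (proved there by a reverse induction over the union of the two trails rather than by composing one-point retractions, but with the same content). This part you may keep.

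Part (ii), however, contains a genuine gap, located exactly where you yourself flag ``the main obstacle''. Your progress measure is $\iota(c)$, the construction-order position of the cop's current vertex, and your plan requires this to be non-decreasing from one cop move to the next. That is false: when the robber steps from $\rho$ to $\rho'$, the maximal vertex of $T(\rho')$ adjacent to $c$ can have strictly smaller position than $c$ -- the homomorphism only guarantees that $c$ is adjacent to $R_{\iota(c)}(\rho')$, whose position is at most $\iota(c)$, and nothing forces $T(\rho')$ to contain an adjacent vertex of larger position. The paper states this explicitly (``the cop's position may decrease''). The quantity that actually is monotone is different: writing the cop's position as $\delta^{k}(\rho)$, one tracks the position of $\delta^{k-1}(\rho)$, the vertex of the robber's trail one step above the cop. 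Proving that this does not decrease needs two applications of the adjacency claim -- once to deduce that the new vertex $y''=\delta^{\ell-1}(y)$ one step above the cop satisfies $y''>x'$, and once more applied to $y''$ itself, using the maximality in the cop's rule to conclude $y''$ is not adjacent to $x'$ and hence lands at or above $x''=\delta^{k-1}(x)$. None of this is carried out in your write-up: the sentences beginning ``one checks'' and ``Controlling this is where one must combine'' describe work still to be done rather than doing it, and the auxiliary fact you invoke (that trails of adjacent vertices ``share a common suffix'') is neither proved nor in the form needed. As it stands, part (ii), and therefore the winning conclusion, is not established.
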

\begin{proof}
We start by fixing a construction ordering $<$ and associated domination map $\delta$.

Suppose that $u$ and $v$ are any two vertices which are joined in $G$. We claim that any vertex $u'$ on the trail of $u$ is joined to the maximal vertex $v'$ on the trail of $v$ with $v'\le u'$, and vice versa. We prove this by reverse induction on the set of vertices in the union of the trails of $u$ and $v$.

Suppose that it holds for some vertex $u'$ in the trail of $u$: thus $u'$ is joined to $v'$ where $v'$ is the maximal vertex in the trail of $v$ with $v'\le u'$. It is immediate from the definition of domination and the domination map that $v'$ is 
adjacent to (or equal to) $u''=\delta(u')$. Now, $v'$ is the greatest vertex in the trail of $v$ which is at most $u'$, and $u''$ is the greatest vertex in the trail of $u$ that is less than $u'$. Therefore one of $v'$ and $u''$ is the next-largest vertex in the union of the trails, and the other is the greatest vertex less than that in the other trail. In either case we have the inductive step, which establishes our claim.

Now suppose the robber is at $x$, the cop is at $x'$ on the trail of $x$, and the robber moves to $y$. We see that $x'$ is joined to the greatest vertex $v$ on the trail of $y$ with $v\le x'$. In particular, $x'$ is joined to a vertex on the trail of $y$, and therefore the strategy is well defined. 

For the second part, suppose that $x'=\delta^k(x)$, and the cop moves to $y'=\delta^\ell(y)$. While the cop's position may decrease (i.e., we may have $y'<x'$) we claim that the position one step closer to the robber on the trail does increase, i.e., $\delta^{\ell-1}(y)>\delta^{k-1}(x)$. Indeed, let $y''=\delta^{\ell-1}(y)$. Applying the above to $x'$ we see that $y'$ is at least the greatest vertex on the trail of $y$ which is at most $x'$, and thus $y''> x'$. Now applying the above to $y''$ we see that $y''$ is joined to the greatest vertex $v$ on the trail of $x$ with $v\le y''$. Since the cop did not move to $y''$ we know that $x'$ is not joined to $y''$; in particular, $v\not=x'$. Since $y''>x'$, we see that $v\ge x'$. Combining these, we have $v> x'$, so $v\ge x''$. Thus $y''\ge v\ge x''$, as required. 

Finally, note that these two conditions, together with the trivial observation that the root is a common ancestor of the whole graph, imply that the graph is a weak cop win: each time the robber returns to a vertex the cop is strictly closer, and each vertex only has finitely many ancestors.
\end{proof}

We now prove the main result of this section.

\begin{theorem}
Let $G$ be a consistently locally constructible graph. Then $G$ is a weak cop win.
\end{theorem}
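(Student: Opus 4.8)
The plan is to upgrade the strategy of the finitary lemma above to the infinite setting, using consistency to replace the many finite constructions by a single global parent map. First I would use condition~(2) to define a map $\delta$ on all of $G$: for a vertex $v$ that is a non-root in some $G_i$, set $\delta(v)=\delta_i(v)$; consistency makes this independent of the choice of $i$, and the only vertices left undefined are those that are the root of every $G_i$ containing them. A short argument then shows $\delta$ is acyclic (any finite cycle lies in some $G_j$ in which all of its vertices are non-roots, contradicting that $\delta_j$ strictly decreases the construction order of $G_j$) and that any two trails eventually merge (inside any $G_i$ containing both starting vertices they meet at $v_i$, and the continuations above $v_i$ coincide). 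Thus $\delta$ organises $V(G)$ into a tree in which the trail of each vertex is its path towards the root. The essential new feature, however, is that this trail may now be an \emph{infinite} ray, exactly as in the weak-cop-win example of Section~3, where $\delta(x_i)=x_{i+1}$.

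The point that lets the finite machinery transfer is that every assertion I need about $\delta$ and about trails refers only to finitely many vertices and to finite initial segments of trails, and any such assertion can be verified inside a single sufficiently large $G_i$, where it reduces to the corresponding statement in the lemma of Isler, Kannan and Khanna. In particular the adjacency--trail claim of that lemma holds globally along any finite portion of the trails. I would then have the cop play that same strategy: keeping himself on the robber's trail, on each turn he moves to the deepest vertex of the trail of the robber's new position to which he is adjacent. Both maintenance (the cop can always do this and so remains on the trail) and the decrease property (whenever the robber returns to a vertex, the cop's index along the trail is strictly smaller than before) are local statements, so each follows by passing to a large enough $G_i$.

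Granting that the cop is on the robber's trail, the weak-cop-win conclusion is immediate and, pleasingly, does not require trails to be finite. The cop's index -- the number of $\delta$-steps from the robber to the cop -- is a nonnegative integer that strictly decreases each time the robber revisits a vertex, so the robber visits each vertex only finitely often, and is caught if the index ever reaches $0$. The role played by `finitely many ancestors' in the finite lemma is now played simply by the well-ordering of $\mathbb{N}$. Hence the robber must leave every finite set, which is exactly what weak cop win demands.

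The main obstacle is \emph{initialisation}: when there is no global root (the roots $v_i$ may run off to infinity, as they do for the line of copies of $K$), the cop cannot simply start at a common ancestor, and in general his chosen starting vertex will not lie on the robber's trail. This is precisely where naive compactness fails, since while the cop heads for a common ancestor the robber can escape into a larger $G_j$ whose common ancestor is further away. I would resolve this by giving the cop an explicit off-trail rule -- when he is not adjacent to the robber's trail he climbs his own trail, $c\mapsto\delta(c)$, towards the region where the two trails merge -- and then tracking the pair consisting of the cop's height above the meet of the two current positions and the robber's height above that meet. The aim is a dichotomy: either the cop drives his height above the meet down to zero in finite time, after which the index argument above takes over, or the robber forever forces the meet upwards, which pushes him up the tree and hence out of every finite set. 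Making this bookkeeping precise -- in particular checking that the cop's catch-up move really does decrease the relevant quantity despite the robber's simultaneous move, and that the estimates remain stable as the ambient $G_i$ grows -- is the delicate heart of the argument.
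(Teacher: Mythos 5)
Your on-trail analysis coincides with the paper's: you define the global parent map $\delta$ from the consistency condition, have the cop move to the deepest reachable ancestor of the robber once he is adjacent to the robber's trail, and justify both the maintenance step and the strict-decrease-on-revisit step by embedding any finite window of the play in a single sufficiently large $G_i$ and invoking the Isler--Kannan--Khanna lemma. Your remark that the conclusion needs only the cop's index along the trail to be a nonnegative integer, not that trails be finite, is also exactly how the paper argues, and your off-trail rule (the cop climbs his own trail via $c\mapsto\delta(c)$) is precisely the paper's Case~2.

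The gap is the part you yourself flag as the ``delicate heart'': termination of the off-trail phase. The meet-height bookkeeping and the dichotomy you sketch are not carried out, and they are not needed. The paper closes this step using a fact you already recorded in your first paragraph and then set aside: any two vertices have a common ancestor under $\delta$, namely the root $v_i$ of any $G_i$ containing both, since by consistency the global trails of both vertices follow $\delta_i$ inside $G_i$ and hence both pass through $v_i$. Consequently, as the cop repeatedly moves to his own parent, every vertex of $G$ becomes a descendant of the cop's position at some time, so Case~1 must eventually occur, after which your on-trail argument finishes the proof. Your concern that the roots $v_i$ may ``run off to infinity'' is beside the point: the cop never needs to reach any particular $v_i$, only to climb to a common ancestor, and common ancestors of his starting vertex with other vertices sit at finite height on his own trail. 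So the structure of your proof is right and the harder half is complete; what is missing is this single observation, in place of which you have left an unfinished and more complicated argument.
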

\begin{proof}
Define the `domination' map $\delta:G\to G$ by $\delta(v)=\delta_i(v)$ for any of the local domination maps that occur in the definition of consistently locally constructible that are defined at $v$. In particular, this means we can talk about the `trail' of any vertex (although now there is no reason why the trail should be finite).

The cop strategy is as follows. Suppose that the robber is at $x$ and the cop at $z$.
\begin{itemize}
    \item Case 1. There exists a neighbour of the cop's current position that is on the trail of the robber. In this case, the cop moves to the most recent ancestor of the robber that he can reach: that is, he moves to the vertex $z'=\delta^k(x)$ with minimal $k$ with $z\sim z'$.
    \item Case 2. Otherwise, the cop moves to the parent of his current position: that is, he moves to $\delta(z)$.
\end{itemize}
Suppose that the cop is ever in Case 1. Then we claim that he remains in Case 1. Indeed, after the cop's move the cop is at $z'$ on the trail of the robber at $x$. The robber moves to some vertex $x'$. If we take any $G_i$  containing all of $x,x',z'$, then Lemma 4 tells us that there is a neighbour of $z'$ on the trail of $x'$ in $G_i$. Since trails in $G_i$ are the same as trails in $G$, the claim follows.

Furthermore, if Case 1 ever occurs then the robber can only visit any vertex finitely many times. Indeed, suppose that the robber has a sequence of moves $x_1,x_2,\ldots x_n,x_1$ starting and finishing at $x_1$, and the cop's sequence under this strategy is $y_1,y_2,\ldots,y_n,y_{n+1}$. Let $G_i$ be chosen to contain all the $x_i$ and $y_i$.  Since the parent maps are consistent, we see that the cop is following exactly the winning strategy in $G_i$, and so in particular, by Lemma 4, $y_{n+1}$ is a (strictly) more recent ancestor of $x$ than $y_1$ was. 

Hence, each time the robber return to $x_1$, the cop is closer on the robber's trail, and after some finite number of loops he catches the robber.

To complete the proof, it remains to show that Case 1 must occur at some point. We note that any two vertices have a common ancestor under $\delta$ (just take the root vertex in any $G_i$ containing the two vertices). Hence, as the cop follows Case 2 by moving to the parent vertex, every vertex becomes a descendant of the cop's position at some time.  
\end{proof}

It is clear that, while the consistency condition makes this proof 
work, it is not the `right' condition. Indeed, even our example of a cop win that is not constructible is actually not consistently constructible, 
since in any subgraph containing the root the rightmost vertex in the 
$K$s, and only that vertex, has the root as its parent.  

\section{Locally constructible graphs may not be weak cop-win}
In this section we first exhibit a locally constructible graph that is not weak cop-win. Our graph is not locally finite, but by carefully modifying the way it is built up we are able to find a locally finite locally constructible graph that is not weak cop-win. The lemma below is at the heart of our construction: it allows us to pass from any graph to a constructible one.

We write $P_n$ for the path of length $n$, and view its vertices as
$0,1,\cdots,n$. For a finite graph $G$ and a positive integer $n$, 
we write $G*P_n$ for the graph with vertex set $G\times P_n$ in which
$(x,j)$ is joined to $(x',j')$ if either $x\sim x'$ and $j\sim j'$, or $j=j'=n$.

\begin{lemma}
For any finite graph $G$, the graph $G*P_n$ is constructible. Moreover, if $G$ is not constructible, then in $G*P_n$ the cop can be forced to visit a vertex of the form $(x,n)$ for some $x$ before catching the robber.
\label{construction}
\end{lemma}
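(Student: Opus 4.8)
The plan is to treat the two assertions separately. For constructibility I would verify the equivalent property that $G*P_n$ is dismantleable, peeling off the levels $0,1,\dots,n-1$ from the bottom upwards and finishing with the clique at level $n$. The key computation is that, once levels $0,\dots,j-1$ have been removed, every vertex $(x,j)$ with $j\le n-2$ is dominated by the vertex immediately above it, namely $(x,j+1)$. Indeed, in the remaining graph the closed neighbourhood of $(x,j)$ consists exactly of the $(x',j')$ with $x'\sim x$ and $j'\in\{j,j+1\}$, and this is visibly contained in the closed neighbourhood of $(x,j+1)$, which in addition contains the level-$(j+2)$ vertices $(x',j+2)$ with $x'\sim x$. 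Removing the lower levels only shrinks neighbourhoods, so the domination persists and we may clear each level in turn. The case $j=n-1$ is handled the same way, using that $(x,n)$ is joined to the whole of level $n$ (so the top clique only enlarges the dominating neighbourhood and causes no trouble); and once only level $n$ survives we are left with a complete graph, which is trivially dismantleable.

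For the second assertion I would use a shadow, or projection, argument. Let $\pi(x,j)=x$ be the projection onto $G$. The crucial observation is that every edge of $G*P_n$ other than the top-clique edges is a strong-product edge, and hence projects to an adjacent-or-equal pair of $G$; in particular, as long as a player never moves between two distinct vertices of level $n$, each of their moves projects to a legal move in the game on $G$. Since $G$ is not constructible it is not cop-win (by the finite characterisation of Nowakowski and Winkler \cite{NW}, noting that if $G$ is disconnected the claim is even more immediate), and as the game is determined the robber has a strategy $R$ that evades the cop forever on $G$. The robber's strategy on $G*P_n$ is then to play entirely within level $0$, at each turn copying the move prescribed by $R$ for the shadow robber $\pi(\text{robber})$ against the shadow cop $\pi(\text{cop})$. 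This copying is always possible because from $(x,0)$ the robber's available moves are precisely the $G$-moves available from $x$.

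Now suppose, for a contradiction, that the cop catches the robber without ever visiting a vertex of level $n$. Then the cop never uses a top-clique edge, so his whole projected trajectory is a legal cop play in $G$, and his chosen starting vertex projects to a legal starting vertex. A capture at $(x,0)$ would project to a capture of the shadow robber at $x$. Hence the shadow cop would catch the shadow robber in $G$, contradicting the fact that $R$ is a winning evasion strategy. Therefore the cop must visit some $(x,n)$ before the robber can be caught, as required.

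The main obstacle is making this shadow correspondence watertight: one must check that the move order (cop then robber) and the initial placements transfer faithfully between the real game confined to level $0$ and the shadow game on $G$, and, most importantly, one must confirm that the \emph{only} edges failing to project to legal $G$-moves are the top-clique edges, so that ``the cop avoids level $n$'' is exactly the hypothesis needed to force a contradiction. The constructibility half, by contrast, is routine domination-checking once the layer-peeling order is fixed.
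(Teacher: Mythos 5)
Your proposal is correct and follows essentially the same route as the paper: the layer-peeling dismantling order you describe is exactly the reverse of the paper's construction order (top clique first, then levels $n-1$ down to $0$), and your shadow/projection argument for the second assertion is precisely the paper's observation that if the cop avoids level $n$ the chase projects to the game on $G$, where the robber can evade forever.
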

\begin{proof}
Note that the vertices with second coordinate $n$ form a complete graph, and so can be constructed first. Once we have these vertices, we observe that the vertex $(x, n-1)$ is dominated by the vertex $(x,n)$, and so we can now add
all of the vertices with second coordinate $n-1$. Continuing in this way, we can add all the vertices, and thus the graph is constructible.

Now suppose that the graph $G$ is not constructible. This means that the robber can avoid being caught on $G$. Thus, if the cop never visits a vertex with second coordinate $n$, we can pretend by projection that the chase happens on $G$, so that the robber can avoid being caught.
We conclude that the cop is be forced to visit a vertex $(x, n)$, for some $x$, before catching the robber.
\end{proof}
The next step is to observe that if we have a graph $G$, and we attach disjoint 4-cycles to all of its vertices, the robber will always win in this new graph regardless of the starting position, by staying on the 4-cycle associated with his starting vertex.\par More precisely, 
let $C_4$ be a  4-cycle, say on vertices $0,1,2,3$, and let $G$ be any graph. We define the graph $G \cdot C_4$ by joining 
$(x,y)$ to $(x',y')$ if either $x=x'$ and  $y$ is adjacent to $y'$, or $x$ is adjacent to $x'$ and $y=y'=0$. As explained above, this graph is clearly a robber win regardless of the starting position.\par Therefore, if we start with the graph $C_4$, which is not constructible, and look at $C_4, C_4 * P_n, (C_4 * P_n) \cdot C_4, 
((C_4 * P_n) \cdot C_4) *P_n, \ldots$, then we are alternating between constructible and non-constructible graphs. To achieve locally constructibility without being a weak cop win, it is reasonable to take the union of these graphs. The intuition behind this is that, although the cop can win on each of the constructible stages, namely the ones after taking a product with $P_n$, he has to go a long way from the robber, as shown in Lemma \ref{construction}. This gives the robber time to get back to the origin and then head off into an extra coordinate.

Now we make this idea precise. The reader should bear in mind that the graph
$\mathcal G$ constructed below is precisely the `nested union' of the above sequence of
graphs. 
\begin{theorem}\label{lcnlf}
There exists a graph $\mathcal G$ which is locally constructible, but is not a weak cop win.
\end{theorem}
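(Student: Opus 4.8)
The plan is to take $\mathcal G$ to be the nested union of the sequence $C_4,\ C_4*P_n,\ (C_4*P_n)\cdot C_4,\ \ldots$, where each graph embeds in the next as the copy sitting at base coordinate $0$. Since each of these embeddings is induced, $\mathcal G=\bigcup_k H_k$ is well defined, with the odd-indexed $H_{2i+1}=H_{2i}*P_{n}$ constructible (by Lemma~\ref{construction}) and the even-indexed $H_{2i}$ a robber win, hence non-constructible (as $C_4$ is a robber win and each $\cdot C_4$ step produces a robber win). Fix once and for all a base vertex and let $o$ denote its persistent image in $\mathcal G$. Local constructibility is then immediate: any finite set $V$ lies in some $H_j$, and if $H_j$ happens to be non-constructible it is contained in the constructible $H_{j+1}$, so $V$ always sits inside a finite constructible induced subgraph. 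The real content is to show that $\mathcal G$ is not a weak cop win: for every cop strategy the robber can both evade capture forever and return to $o$ infinitely often.

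For the robber I would argue round by round. At the start of a round the robber sits on a pendant $4$-cycle $F$ at the base of the current active stage $H_{2k}$, and inside the next stage $H_{2k+1}=H_{2k}*P_{n_k}$ he plays the robber strategy underlying Lemma~\ref{construction}: he projects away the path coordinate and runs the $4$-cycle-evasion strategy for the robber win $H_{2k}$, staying at path level $0$. By Lemma~\ref{construction} this keeps him uncaught unless and until the cop visits a vertex at the top level $n_k$. This yields a clean dichotomy. If the cop never reaches the top level, the robber sits on $F$ forever and so visits the vertices of $F$ infinitely often, and we are done. If instead the cop is forced up to the top level, then at that instant the cop lies at distance at least $n_k$ from the entire base region; the robber exploits this window to walk unhurried to $o$ (recording one more visit) and then climb into a fresh pendant $4$-cycle at a strictly higher stage $H_{2k+2}$, where the next round begins. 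As $\mathcal G$ has infinitely many stages the robber can always escalate, and each escalation passes through $o$, so $o$ is visited infinitely often.

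The delicate point, and the main obstacle, is the timing: one must ensure the escape window is genuinely large enough. When the cop commits to the top level he pays at least $n_k$ moves to climb and at least $n_k$ more to descend, whereas the robber's relocation to $o$ and up into a new haven costs at most the diameter of the part of $\mathcal G$ built so far. Those diameters grow as we build the graph, so a single fixed path length will not suffice; instead I would choose the path lengths recursively, taking $n_k$ (used to form $H_{2k+1}$ from the already-constructed $H_{2k}$) large compared with the diameter of $H_{2k}$, with enough margin that the robber provably reaches $o$ and re-camps on a new $4$-cycle before the cop can descend and arrive. Bounding the cop's best possible pursuit against the robber's travel time at every round, and verifying this inequality cleanly, is where the work lies. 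Finally, I would observe that this $\mathcal G$ is not locally finite: at each $\cdot C_4$ step a new pendant $4$-cycle is attached at the persistent origin, so $o$ acquires infinite degree; eliminating this defect is precisely the extra difficulty addressed by the locally finite version.
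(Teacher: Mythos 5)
Your construction is the same as the paper's: the graph $\mathcal G$ in the paper is explicitly described as the nested union of $C_4,\ C_4*P_n,\ (C_4*P_n)\cdot C_4,\ldots$, local constructibility is proved exactly as you do (the constructible stages are cofinal), and the robber's plan---evade on a pendant $4$-cycle until Lemma~\ref{construction} forces the cop to a top path level, then dash to the origin and escalate to a fresh cycle---is the paper's three-stage strategy. The problem is that the step you defer as ``where the work lies'' is essentially the entire proof, and it contains two genuine verifications that your sketch does not supply. First, your robber runs the $4$-cycle evasion ``by projection'' inside $H_{2k+1}=H_{2k}*P_{n_k}$, but the cop moves in all of $\mathcal G$, where infinitely many later cycle and path coordinates are available; you must show that a single cop move cannot simultaneously close in on the robber's active cycle coordinate and change other coordinates in a way that breaks the projection argument. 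The paper does this by writing the adjacency down coordinate-wise and checking that one move alters either the $m$-th cycle coordinate or only earlier/later coordinates, but not both, unless the cop enters a vertex with some path coordinate equal to $6$. Second, when the cop reaches a top path level you assert he is at distance at least $n_k$ from the base region; since top path levels are complete graphs, and later stages of $\mathcal G$ wrap around the earlier ones, one must rule out shortcuts back to the origin through higher coordinates. The paper's argument (take the maximal path coordinate that ever equals $6$ on a cop path to $\widehat 0$; by maximality it can only change by $1$ per step, so the path has length at least $6$) is short but is exactly the point where a careless definition of the edges would make the theorem false.

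Separately, your device of choosing the path lengths $n_k$ to grow faster than the diameter of $H_{2k}$ is addressing a problem that does not arise in this construction: every $4$-cycle the robber ever uses is attached at the origin $\widehat 0$, and his strategy keeps all coordinates zero except one cycle coordinate, so he is always within distance $2$ of $\widehat 0$ and a fixed path length ($6$ in the paper) suffices. Growing path lengths are needed precisely in the paper's \emph{locally finite} variant (Theorem~\ref{lclf}), where the cycles are strung along a spine and the robber's return distance really does grow; you have imported that difficulty into the easier construction while correctly noting at the end that your $\mathcal G$ is not locally finite.
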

\begin{proof} We define the graph $\mathcal G$ as follows. The vertex set is $C_4\times P_{6}\times C_4\times P_{6}\times C_4\times
P_{6}\times\ldots$, where we insist that all but finitely many of the
coordinates are 0. Let $\widehat 0$ be the vertex where all coordinates are 0. In order to define the edges we consider three cases. Let $v$ and $v'$ be two vertices.\par If all their $C_4$ coordinates are $0$ and there is no $P_6$ coordinate in which both vertices are 6, then $v$ is adjacent to $v'$ if and only if they differ by at most 1 in all $P_6$ coordinates.
\par Otherwise let $m_1$ be the maximal $C_4$ coordinate in which $v$ and $v'$ are not both $0$, and $m_2$ the maximal $P_6$ coordinate in which both $v$ and $v'$ 
are 6 (and we set $m_i=0$ if the corresponding coordinate does not exist).
\par If $m_1<m_2$, then $v$ is adjacent to $v'$ if and only if, after the $m_2^{\text{th}}$ coordinate, all their $P_6$ coordinates differ by at most 1 -- note that after the $m_2^{\text{th}}$ coordinates all their $C_4$ coordinates are 0 by definition. \par 
If $m_1>m_2$, then $v$ is adjacent to $v'$ if and only if they agree on all coordinates less than $m_1$, differ by at most 1 in the $m_1^{\text{th}}$ coordinate, and differ by at most 1 in all the $P_6$ coordinates greater that $m_1$. 
\begin{claim} The graph $\mathcal G$ is locally constructible.
\end{claim}
\begin{proof} We observe that the graph we get if we restrict to all vertices which are always zero after some particular $P_6$ coordinate is finite and, by Lemma \ref{construction}, is constructible. Another way to see that this graph is constructible is to show that the cop wins on this graph. Indeed, the cop goes to level 6 (the maximum level) in the final $P_6$ coordinate. Let that coordinate be $m$. He is then able to immediately move to a vertex above the robber on the rest of the coordinates. Then, after each robber move, if the robber is at the same level as, or one below, the cop, then the cop immediately catches him. Otherwise the cop moves to stay above the robber on the rest of the coordinates, while reducing the $m^\text{th}$ coordinate by 1. In this way the cop must catch the robber by the time the cop reaches level 0.
\end{proof}
\begin{claim} The graph $\mathcal G$ is not weak cop-win.
\end{claim}
\begin{proof}
The robber's strategy is to always have all coordinates zero with at most one exception, and that exception is in a cycle coordinate. It is clear that after the cop chooses his starting position, the robber can choose a large cycle coordinate $m_0$ and start at the vertex with 2 in the $m_0$ coordinate and 0 elsewhere. Note that this implies that the robber is distance at least 2 from the cop. The robber commits to stay in this cycle (that is, all coordinates except the $m_0^\text{th}$ coordinate are zero) until he reaches $\widehat{0}$, after which he enters a different cycle and the whole process repeats. 

In the discussion that follows the cop always moves first. We define 3 stages of the strategy, where $m$ is the cycle coordinate the robber is currently committed to stay in before he gets to $\widehat{0}$. 
\\

\noindent\textit{Stage 1.} The robber is not at $\widehat{0}$ and the cop's vertex has no path coordinate $6$. Furthermore,  either the $m^{\text{th}}$ coordinate of the cop's vertex is 2 different from the $m^{\text{th}}$ coordinate of the robber's vertex, or it is 1 different and the cop's vertex has a non-zero earlier coordinate.\\
\textit{Stage 2.} The robber is not at $\widehat 0$ and the cop's vertex has a 6 in some path coordinate.\\
\textit{Stage 3.} The robber is at $\widehat 0$ and the cop is at least distance 2 away from the robber.
\\
\par Suppose we are in Stage 1 of the strategy, the cop is at $v$ and the robber is at $w$. By the definition of the edges of $\mathcal{G}$, in one move the cop can go to a vertex $v'$ that differs from $v$ either in the  $m^\text{th}$ coordinate or in some coordinate less than $m$ -- these two cases are disjoint by construction. In either case the coordinates of $v'$ greater than $m$ may differ from those of $v$. If $v'$ has a 6 in some $P_6$ coordinate then the robber does not move and we are now in Stage 2. Thus assume $v'$ does not have a 6 in any $P_6$ coordinate.

If the vertex $v'$  differs from $v$ in some coordinates greater than $m$, then they have the same $m^\text{th}$ coordinate and, in particular, $v'$ and $w$ differ in the $m^\text{th}$ coordinate by at least 1. Thus the robber moves (if necessary) to a vertex $w'$ that differs from $v'$ by at least 2 in the $m^\text{th}$ coordinate. In this case we are either in Stage 1 or, if the robber has reached $\widehat{0}$, in Stage 3.

Finally, if $v$ and $v'$ differ in the $m^\text{th}$ coordinate, then the robber moves to a vertex $w'$ such that the difference between the $m^\text{th}$ coordinates of $v'$ and $w'$ is the same as the difference between the $m^{\text{th}}$ coordinates of $v$ and $w$. Again, we are either in Stage 1 or, if the robber has reached $\widehat{0}$, in  Stage 3.
\par If we are in Stage 2 of the strategy, we observe that the cop is distance at least 6 from $\widehat{0}$. Indeed, let the cop be at $v_0$, and fix a minimal path from $v_0$ to $\widehat{0}$. Consider the maximal $P_6$ coordinate that is ever 6 on this path. This coordinate needs to become 0 and can only change by at most 1 at each step along the path. Thus the path has length at least 6.

It follows that the robber can reach $\widehat{0}$, which takes at most 2 steps as his vertex has all coordinates 0 except for one cycle coordinate, and we are now in Stage 3.\par Finally, suppose we are in Stage 3 and the cop moves somewhere. He must still be at least distance 1 from $\widehat{0}$. The robber picks a new cycle coordinate $m'$ where $m'$ is greater than any of the non-zero coordinates of the cop's vertex, and moves to 1 in this cycle coordinate. We are now back to Stage 1.\par This strategy ensures the robber is never caught. Moreover, the robber either stays in Stage 1 after some point, which means he stays on one particular cycle forever, or he reaches Stage 3 infinitely often, so visiting $\widehat{0}$ infinitely often. We conclude that this graph is not a weak cop win.
\end{proof}
This concludes the proof of Theorem~\ref{lcnlf}.
\end{proof}
\par The above construction gives us a locally constructible graph that is not weak cop-win. However, this graph is not locally finite, as for example the degree of $\widehat{0}$ is infinite. It is natural to ask what happens if we insist that the graph is locally finite. Does this, together with the condition that it is locally constructible, guarantee that the graph is weak cop-win? Below we answer this question negatively by modifying the previous construction so that the graph is locally finite and yet not weak cop-win.

The key extra idea is to obtain locally finiteness by attaching the (iterated) graphs $G*P_6$ from the previous construction along the vertices of an infinite path rather than all to the same vertex. However, this means that it takes the robber longer and longer to return to the origin, so rather than using $G*P_6$ each time we will have to use a more involved construction, and in particular we will need to use an increasing sequence of paths lengths rather than always using $P_6$ when constructing the graphs. 

First we make precise what we mean by the description above of `attaching graphs along the vertices  of an infinite path'. Let $(G_n)_{n\geq 0}$ be any nested sequence of finite graphs -- in other words $G_n$ is a fixed induced subgraph of $G_m$ for all $m \geq n$. We define the \textit{union graph} $\bigsqcup G_n$ to be the graph with vertex set the disjoint union of the vertex sets of all $G_n$, which we view as pairs $(n,x)$ where $n\in\mathbb N$ and $x\in G_n$, with $(n,x)$ adjacent to $(n',x')$ if $|n-n'|\leq 1$ and $x\sim x'$. 

We observe that if a particular $G_k$ is constructible then the subgraph of $\bigsqcup G_n$ given by the vertices $(m,x)$ with $m\leq k$ is constructible. Indeed, we first construct the graph with vertices $(k,x)$ which, because it is isomorphic to $G_k$, is constructible. As before, each vertex $(k-1,x)$ is now dominated by $(k,x)$, and so we can add the entire $k-1$ layer. Continuing in this way we add all the layers, and so the graph is constructible.

Next we define an important step in our construction of each of the graphs $G_n$. This is analogous to Lemma~\ref{construction}, but modified to our new setting. Let $G$ be a finite graph. We say that $G'$ is the \textit{hive graph} of $G$ of \textit{height} $n$ if $G'$ has vertex set $G\times\{0,1,\cdots,n\}$ together with a special vertex $v$ called the \textit{hive vertex} that is adjacent to all vertices of the form $(x,n)$, and $(x, i)$ is adjacent to $(x',i')$ if $x\sim x'$ and $|i-i'|\leq 1$. 

The key points of the hive construction are that, for any $G$, the graph $G'$ is constructible (just start from the hive vertex, then construct layer $n$, then layer $n-1$, and so on down to layer 0 in turn), and that if $G$ is not constructible then the cop cannot win without visiting the hive vertex --  which is a long way from the 0-layer. 

We are now in a position to define our example $\mathcal{H}$ of a locally finite locally constructible graph that is not a weak cop win. We start with $G_0$ as a single vertex $0$. Given $G_{n-1}$, we form $H_n$ by adding a new copy of $C_4$ at $0$ (in other words, we take the disjoint union of our graph with $C_4$ and then identify the two vertices called $0$). We then set $G_n$ to be the hive graph of $H_n$ of height $l_n=2n+5$ with hive vertex $v_n$. The graphs $G_n$ are naturally nested with $G_{n-1}$ a subset of $H_n$ which in turn is a subset of the 0-level of $G_n$. Finally, we define $\mathcal H$ to be the union graph $\bigsqcup G_n$.  We call the vertex $(0,0)$ the \textit{origin} and the set $S=\{(n,0):n\in\mathbb N\}$ the \textit{spine}.

Figure~\ref{fig:hive_graph} below shows how the graph $G_2$ is built up (but with $l_1=l_2=3$ for 
readability). We start with $G_0$, which is the single purple vertex. Next we form $H_1$ by adding the blue 4-cycle. From $H_1$ we form the red hive graph $G_1$ with hive vertex $v_1$ and height 3. We then form $H_2$ by attaching the green 4-cycle to the origin (the purple vertex). Finally, we form $G_2$, the hive graph of $H_2$ with height 3 and hive vertex $v_2$. The dotted lines are drawn to indicate that there are edges between the 4-cycles, between the copies of $H_2$, and so on. 
\begin{figure}[h]
\centering
\includegraphics[width=12cm]{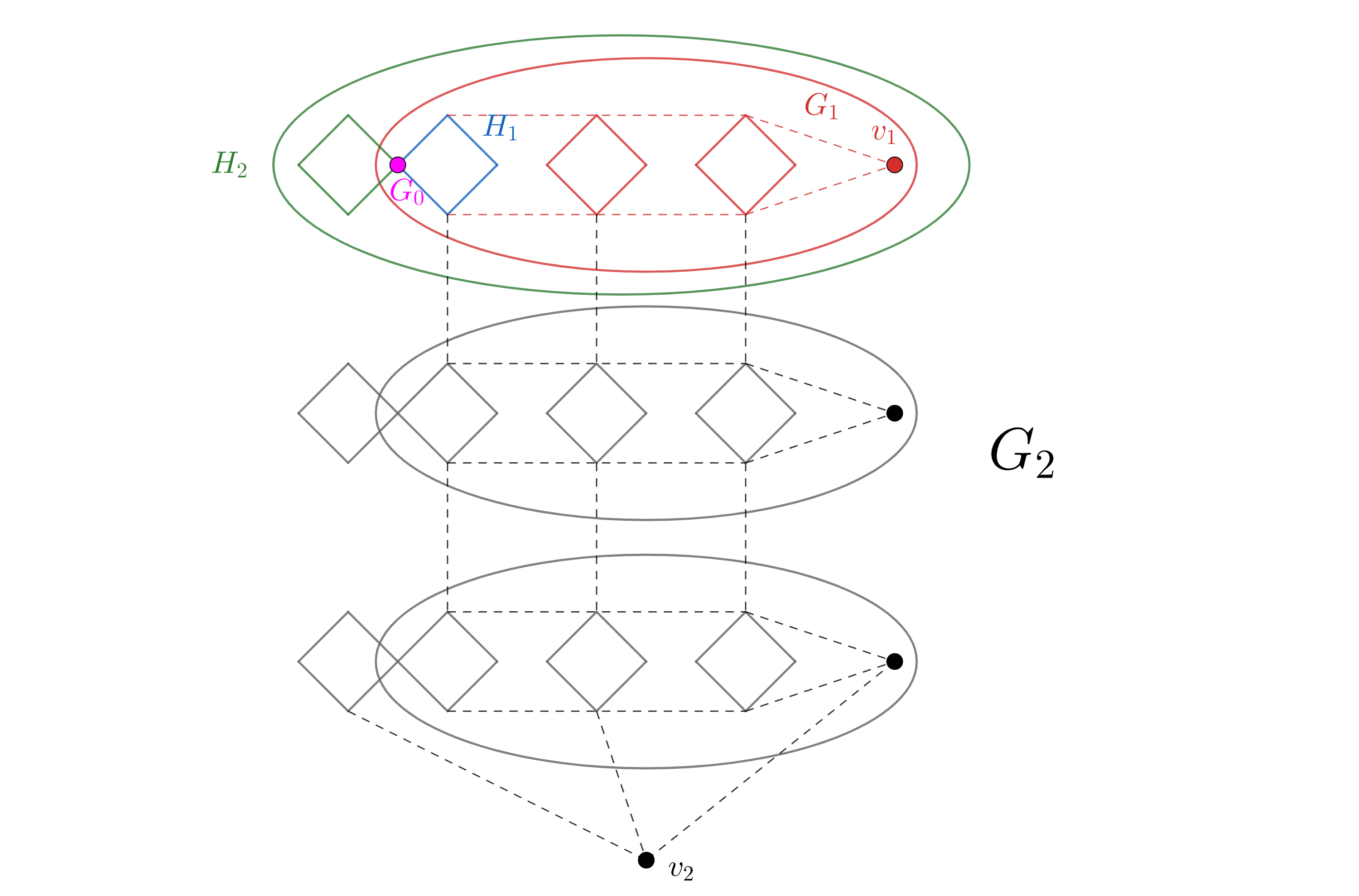}
\caption{The graph $G_2$ showing $G_0,H_1,G_1,H_2$ as subgraphs. (Note that to keep the picture manageable we have set $l_1=l_2=3).$ }
\label{fig:hive_graph}
\end{figure}
\begin{theorem}\label{lclf}
The graph $\mathcal H$ is locally finite and locally constructible, but is not a weak cop win.
\end{theorem}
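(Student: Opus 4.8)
First I would dispose of the two easy properties. For local finiteness, observe that a vertex $(n,x)$ of $\mathcal H$ is adjacent only to vertices $(n',x')$ with $|n-n'|\le 1$; as each of the (at most three) graphs $G_{n-1},G_n,G_{n+1}$ is finite, the vertex $(n,x)$ has finite degree. For local constructibility, given a finite set $V$ of vertices, choose $k$ so large that every vertex of $V$ lies in a layer of index at most $k$. Since $G_k$ is a hive graph it is constructible, and so, by the observation about union graphs made above, the finite subgraph of $\mathcal H$ spanned by all layers of index at most $k$ is constructible; it contains $V$, as required.

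The substance is to produce a robber strategy witnessing that $\mathcal H$ is not a weak cop win, and here I would follow the three-stage pattern of Theorem~\ref{lcnlf}, but now keeping careful track of distances along the spine. At all times the robber sits on one of the $4$-cycles attached to the spine -- his \emph{active cycle}, at some layer $n$ -- dodging there, except while relocating. The crucial structural fact, inherited from the hive construction, is that while the robber dodges on a cycle at layer $n$ the cop cannot catch him without first reaching the hive vertex $v_n$ of that layer: no vertex of height at most $1$ is adjacent to all four cycle-vertices, and a hive vertex $v_{n\pm1}$ of a neighbouring layer is adjacent to no vertex of layer $n$ at all (the heights $l_{n\pm1}$ do not match $l_n$), so the active $C_4$ genuinely behaves like the non-constructible obstruction of Lemma~\ref{construction}. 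Thus, if the cop never visits a hive vertex, the robber simply dodges on one cycle forever and visits its four vertices infinitely often; otherwise we are in the interesting case.

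The three stages are then: in Stage~1 the cop has not reached the hive vertex $v_n$ above the active cycle, and the robber dodges safely; in Stage~2 the cop reaches $v_n$, at which moment he is at distance at least $l_n=2n+5$ from every $0$-level vertex, whereas the robber is only about $n$ spine-steps from the origin, so he flees there with no danger of interception; and in Stage~3, with the robber back at the origin, he chooses a new active cycle whose layer $n'$ exceeds $n$ by a fixed amount and relocates, returning to Stage~1. The whole argument turns on the slope-$2$ growth of $l_n$. Since the cop reached height about $2n+5$ and can drop by at most one unit per step, after the flee (which costs about $n$ steps) his height still exceeds the old layer index $n$ by a fixed positive constant; as the new layer $n'$ is chosen to exceed $n$ by a smaller amount, $n'$ lies below the cop's height throughout the outward trip, and since the robber is at layer index at most $n'$ at every point of that trip, the cop cannot lower his height fast enough to meet him -- precisely the step that a growth rate of slope at most $1$ would ruin, and for which the summand $5$ supplies the slack. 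Because $n'$ grows by a fixed amount at each pass, the active layer tends to infinity, so the robber is never confined to a finite (hence cop-win) block of layers; and because he passes through the origin at every flee, the origin is visited infinitely often. Either way -- dodging forever on a single cycle, or returning to the origin infinitely often -- some vertex recurs infinitely often, so $\mathcal H$ is not a weak cop win.

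The delicate part, and the one I would treat most carefully, is the quantitative verification of the stage transitions together with the safety of Stage~1. The genuinely new phenomenon, absent from Theorem~\ref{lcnlf}, is that here \emph{both} returning to the origin and re-launching to a fresh cycle are expensive (linear in the layer), and the cop may move diagonally, lowering his height and changing his layer in the same step, so that a careless robber could be intercepted in mid-spine. Making precise the invariant that the cop's height always stays comfortably above the robber's current layer index -- and checking that this invariant is restored after each flee, thanks exactly to the $2n+5$ bound -- is the heart of the proof. One must also pin down the precise trigger for Stage~2 and confirm that no configuration of the cop in layers $n-1,n,n+1$ can threaten the active cycle before he reaches $v_n$, so that the robber really does enjoy the full height $l_n$ as a head start at the instant he flees.
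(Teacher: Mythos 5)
Your treatment of local finiteness and local constructibility coincides with the paper's, and your three-stage outline (dodge on a $C_4$, flee to the origin when the cop goes high, relaunch to a farther cycle, with the slack coming from $l_n=2n+5$) is the right skeleton. But the structural claim on which your safety analysis rests --- that the cop cannot threaten the active cycle without first reaching \emph{the} hive vertex $v_n$ of that layer, so that the robber may use ``has the cop reached $v_n$?'' as his trigger and $l_n$ as his head start --- is exactly the naive claim that the paper pauses to refute before giving the real argument. The $C_4$ first added in $H_m$ is replicated at every height of every later hive construction $G_{m+1},G_{m+2},\dots$ and in every layer $n\ge m$ of the union graph, and each later construction has its own hive vertex whose copies are scattered at arbitrary distances from the origin. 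The cop can menace the robber's cycle, or ``unlock'' it, from any vertex that projects onto a hive vertex $v_k$ with $k\ge m$, without ever approaching $v_m$ or staying within layers $n-1,n,n+1$; your justification only rules out threats from height at most $1$ and from the adjacent layers' hive vertices, which does not cover these copies.

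The paper's fix is a projection machinery that your proposal lacks and that cannot be avoided by bookkeeping ``the cop's height'' (there are infinitely many nested height coordinates, so that invariant is not even well defined). One defines the hive maps, the one-step projections $Q_n$, the $n$-projections $J_n$ and $J'_n$, and the \emph{hive-type vertices of order $n$} (those mapping to $v_n$ under $J_n$); the Stage~1 invariant is $d_{H_m}(J'_m(x),y)\ge 2$ for the \emph{projected} cop position $J'_m(x)$, which works because $J'_m$ is a homomorphism away from hive-type vertices of order $\ge m$; the Stage~2 trigger is the cop visiting \emph{any} hive-type vertex of order $\ge m$, at which moment Lemma~\ref{l:to-spine} places him at distance at least $l_k+1$ from the entire spine; and in Stage~3 the robber must relaunch to the cycle of index $k'+1$ where $k'$ is the \emph{maximal} order of hive-type vertex the cop visited --- not ``the old index plus a fixed amount'', since $k'$ can exceed $m$ by an arbitrary amount --- with Lemma~\ref{l:project} and the inequality $m+2+k'+3\le 2k'+5\le l_{k'}$ guaranteeing that the projection of the cop's path onto $H_{k'+1}$ never meets the new cycle. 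Without the projections, the correct trigger, and the $k'$-dependent relaunch, the strategy as you describe it can be defeated, so the gap is genuine rather than a matter of unfinished quantitative checking.
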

\begin{proof} Certainly the graph is locally finite, as a vertex $(n, x)$ is only adjacent to vertices $(n,y)$, $(n-1, x')$ and $(n+1,x'')$, which form a finite set as $G_{n-1}$, $G_n$ and $G_{n+1}$ are finite graphs.
\begin{claim} The graph $\mathcal H$ is locally constructible.
\end{claim}
\begin{proof} We saw above that any hive graph is constructible, and hence the graphs $G_n$ are all constructible. This, combined with the above observation (about what happens when a $G_k$ is constructible), tells us that for every $n$ the subgraph of $\mathcal H$ induced by the vertices $(m, x)$ with $m\leq n$ is constructible. Thus $\mathcal H$ is locally constructible.
\end{proof}
\begin{claim} The graph $\mathcal H$ is not a weak cop win.\end{claim}
\begin{proof}
The rough idea is that if the robber is in one of the 4-cycles, say the one that appears first in $G_m$, then the cop can force the robber out of this cycle, but in order to do so he has to go to some hive vertex of some $G_m$ with $m\ge n$, which means he is a long distance away from the robber. This gives the robber time to go to the origin and back out further than the cop. 

However, as stated this is not correct: the cop can force the robber out of a 4-cycle by going to any of the copies of a hive vertex in later hive constructions, and these vertices can be arbitrarily far from the origin. Therefore, instead of looking at the cop's position itself, we look at how it `projects' onto $G_m$. To make this idea precise we need a better understanding of the hive graphs and their properties. 

Since we are dealing with several different graphs, many of which have vertices in common, in what follows, for a graph $G$, we denote by $d_G(x,y)$ and $d_G(z, A)$ the distance in $G$ between two vertices $x$ and $y$, and between a vertex $z$ and a set of vertices $A$ respectively.

Let $H$ be a finite graph and $H'$ a hive graph of $H$ with hive vertex $h$. We define the \textit{hive map} to be the function from $H'\setminus\{h\}$ to $H$ that projects the vertices to the base layer -- in other words $(x,m)$ is mapped to $(x,0)$ (and we view $(x,0)$ as identified with $x$). We note that the hive map is a graph homomorphism (but is not defined for the hive vertex).

\par Returning to the graphs $G_n$ used in the construction of $\mathcal H$, we define the \textit{one-step projection} $Q_n$ to be the function mapping $G_{n}\setminus\{v_n\}$ to $G_{n-1}$ by first applying the hive map $G_n\setminus\{v_n\}\to H_n=G_{n-1}\cup C_4$, followed by the map $G_{n-1}\cup C_4\to G_{n-1}$ that sends all the vertices in the $C_4$ to $0$. It is easy to see that the one-step projection $Q_n$ on $G_n\setminus\{v_n\}$ is a graph homomorphism.

We inductively define the \textit{$n$-projection} $J_n$ to be the map $\mathcal H\to G_n\cup\{v_k:k>n\}$  such that:
\begin{center}
$J_n((m,x))=\begin{cases}x&\text{if } m\leq n,\\
(m,x)&\text{if } m>n \text{ and } (m,x) \text{ is a hive vertex,}\\J_n((m',x'))& \text{otherwise, where } (m',x')\text{ is the one-step projection of }(m,x).\end{cases}$
\end{center}
It is important to note that the map $J_n$ is almost a graph homomophism, in the sense that it only fails to be a homomorpism for vertices that reach a hive vertex in the definition; in other words $J_n$ restricted to $J_n^{-1}(G_n)$ is a graph homomorphism. With this in mind, we classify the exceptional vertices, calling the vertices in $J_n^{-1}(v_n)$, \emph{hive-type vertices of order $n$}. Note that if $J_n(x)=v_n$ then $J_m(x)=v_n$ for all $m\le n$.

The map $J_n$ is a projection onto $G_n$. At other points in the proof we will want a projection onto $H_n$ instead of $G_n$, so we define $J'_n:\mathcal{H}\to H_n\cup \{v_k:k\ge n\}$ to be $J_n$ followed by the hive map.

\begin{lemma}\label{l:to-spine} Let $x$ be a hive-type vertex of order $n$ and $S$ the spine. Then  $d_{\mathcal H}(x,S)\ge l_n+1$.
\end{lemma}
\begin{proof} 
Fix a path from $x$ to $S$. Let $y$ be a vertex on the path $P$ of maximum hive-type order, and suppose it has order $m$. Since $x$ itself has hive-type order $n$ we see $m\ge n$. By our choice of $m$ the path $P$ is in $J_m^{-1}(G_m)$, so $J_m(P)$ is a path in $G_m$. Since any hive vertex of order $m$ maps to $v_m$ under $J_m$, and any vertex on the spine maps to $0$ under $J_m$, we see that the path $J_m(P)$ contains both $v_m$ and $0$. However, it is easy to see that $d_{G_m}(v_m,0)=l_m+1\ge l_n+1$, as the `level' in the hive graph can decrease by at most 1 at each step. The result follows.
\end{proof}
\begin{lemma}\label{l:project} Let $x$ be a hive-type vertex of order $n$ and suppose $P$ is a path of length at most $l_n$ not containing any hive-type vertex of order greater than $n$. Then $J'_{n+1}(P)$ does not contain the vertex $0$ or any vertex of the $C_4$ first added in $H_{n+1}$. 
\end{lemma}
\begin{proof} 
Since $P$ does not contain any hive-type vertex of order greater than $n$, the projection map $J'_{n+1}$ is a graph homomorphism on $P$: that is, $J'_{n+1}(P)$ is a path in $H_{n+1}$. Using Lemma~\ref{l:to-spine} (or directly), we see that $d_{H_{n+1}}(v_n,0)\ge d_\mathcal{H}(v_n,S)\ge l_n+1$.
The path starts at $v_n$ so the result follows.
\end{proof}

We are now in a position to define the robber's strategy. As in the previous construction, we have several stages of this strategy that we cycle through. In other words, the strategy allows the game to move through the different stages, or eventually remain in Stage 1. Below we explain what the stages are and, given the fact that the cop and robber are in a particular stage, how the robber can force the game into a different stage (or not leave Stage 1). We view each turn as being the cop moving followed by the robber responding.\\
\par\textit{Stage 1.} The robber is at vertex $y$ in the cycle $C_4$ that first appears in $H_m$, the cop is at vertex $x$, and $d_{H_m}(J'_m(x),y)\ge 2$. The cop moves to a vertex $x'$. If $x'$ is a hive-type vertex of order at least $m$ we move to Stage 2. Otherwise, we see that $J'_m(x)$ and $J'_m(x')$ are neighbours in $H_m$. The robber stays on the cycle $C_4$ that first appears in $H_m$, moving to a vertex $y'$ with $d_{H_m}(J'_m(x'),y')\ge 2$. In particular, the robber is not caught, and we remain in Stage 1.\\
\par \textit{Stage 2.} The robber is at vertex $y$ in the cycle $C_4$ that first appears in $H_m$, or at a point on the spine $(0,l)$  with $l<m$, and  the cop is at a hive-type vertex of order $k\ge m$.  The robber now goes to the spine in at most two steps, then to the origin in a further $m$ steps. When the robber reaches the origin we move to stage 3. By Lemma~\ref{l:to-spine} the cop's distance from the spine is at least $l_k+1>m+2$, so the robber is not caught during this stage.\\
\par\textit{Stage 3.} The robber is at the origin. Let $k'$ be the maximal order of any hive-type vertex the cop visited during Stage 2. Since at the start of Stage 2 the cop was at a hive vertex of order $k$, we have 
$k'\ge k$. The robber sets off for the vertex $(k'+1,0)$ in $H_{k'+1}$, and then to the point opposite the spine in the $C_4$ added at stage $k'+1$. This would take time $k'+1+2$. However, if at any point during this the cop reaches a hive vertex of order at least $k'+1$, the robber immediately switches back to Stage 2.\par If the cop does not go to any such vertex then let $P$ be the path followed by the cop during Stages 2 and 3. Since Stages 2 and 3 together take at most time $m+2+k'+1+2\le 2k'+5\le l_{k'}$, the path $P$ has length at most $l_{k'}$. Thus, since $P$ does not contain any hive vertex of order greater than $k$, Lemma~\ref{l:project} implies that $J_{k'+1}(P)$ does not contain 0 or any vertex of the $C_4$ first added in $H_{k'+1}$. This shows that the robber is not caught, and that this stage finishes with the robber at vertex $y$ and the cop at vertex $x$ with $d_{H_{k'+1}}(J_{k'+1}(x),y)\ge 2$, and we move back to Stage 1.\\
\par The game starts by the cop picking a vertex $y$, then the robber chooses a vertex satisfying the conditions for Stage 1. In the above strategy either the robber stays in Stage 1 after some time, which means the robber eventually stays in the same 4-cycle forever, or Stage 2 occurs infinitely often, which means the robber visits the origin infinitely often. We conclude that the graph $\mathcal H$ is not weak cop win.
\end{proof}
This concludes the proof of Theorem~\ref{lclf}. 
\end{proof}
\section{The construction time of constructible graphs}

In this section we turn to the possible ranks of a constructible graph. Recall that
the rank (or construction time) of a constructible graph $G$ is the least order-type of any construction ordering for $G$. It is easy to find graphs with construction time $n$, where $n$ is any positive integer--for example a path with $n$ vertices. By taking an infinite path we can also achieve construction time $\omega$. \par The next step is to ask if there exists a graph with construction time $\omega+1$ -- in other words we need to make infinitely many extensions and then one more at the end to be able to finish the construction. 

This was achieved by Evron, Solomon and Stahl \cite{ESS}. In fact, they showed that
the set of construction times (of countable graphs) is unbounded in the countable
ordinals. They asked, more generally, which countable ordinals can be the construction time of a graph? In this section we answer this question by constructing a graph with construction time $\gamma$, where $\gamma$ is any countable ordinal.

\par We start by giving a graph of rank $\omega+1$. We mention that this result will
be contained in our general result below (and in that general result the construction will actually be slightly different) -- we include it here to illustrate in a simpler
setting how the graph $K$ can be used.

We define the graph $G$ as follows. We take countably infinitely many disjoint copies of $K$, say $K_i$ for each positive integer $i$, and two additional vertices which we call $A$ and $B$. Let $x_i$ and $y_i$ be the vertices of $K_i$ corresponding to $x$ and $y$ in $K$. We join $A$ to $x_i$ and $y_i$ for every $i$, and $B$ to $x_i$ for every $i$. We also join $A$ and $B$. The graph $G$ is pictured below.\\
\begin{figure}[h]
    \centering
    \includegraphics[width=10cm]{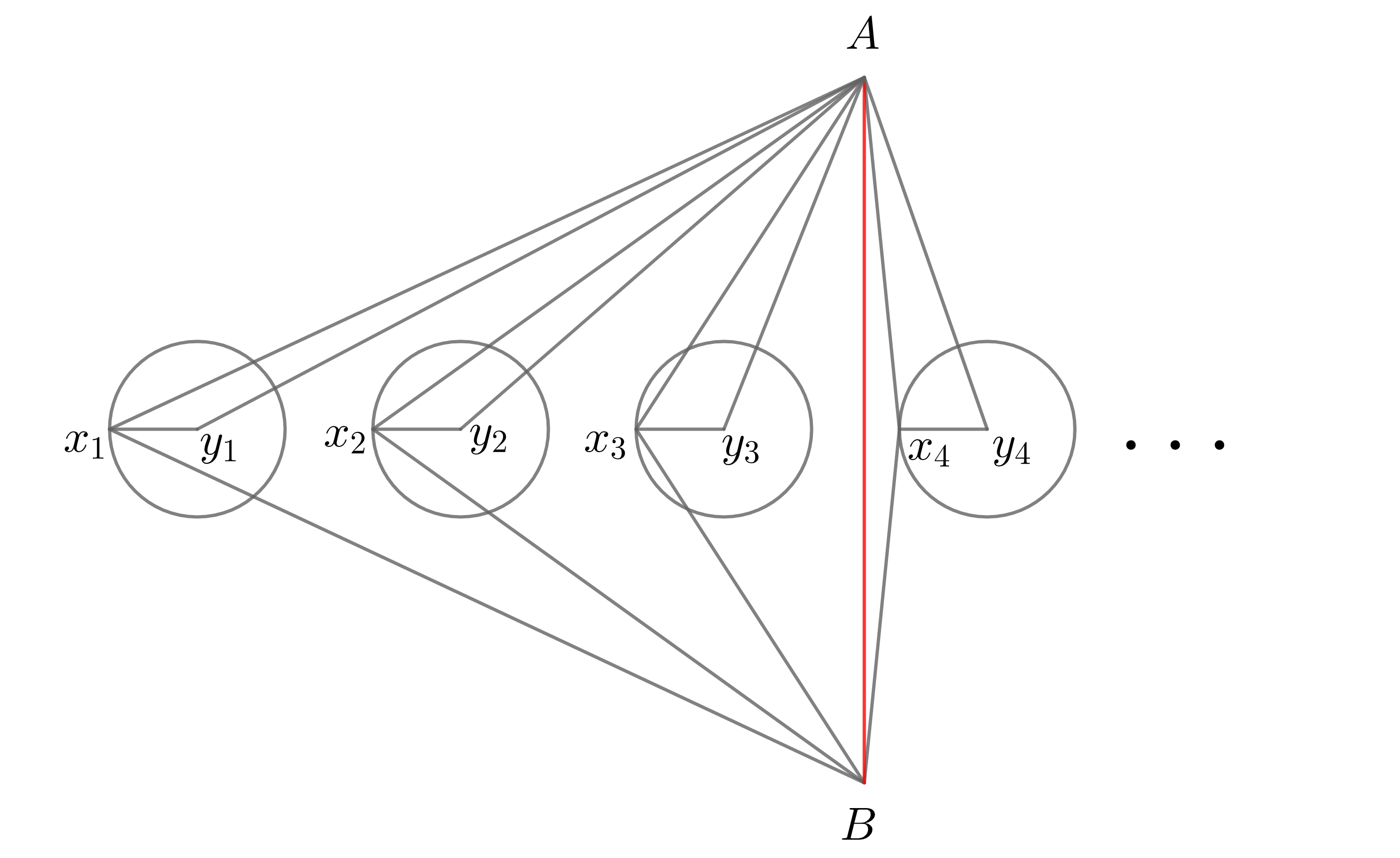}
    \caption{The graph $G$ for Theorem~\ref{t:omega_plus_1}.}
    \label{fig:omega_plus_1}
\end{figure}
\begin{theorem}\label{t:omega_plus_1}
The construction time of $G$ is $\omega+1$.
\label{omega}
\end{theorem}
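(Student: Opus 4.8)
The plan is to prove the two bounds $\mathrm{rank}(G)\le\omega+1$ and $\mathrm{rank}(G)>\omega$ separately. The lower bound is where the graph $K$ earns its keep, via Lemma~\ref{help}; the upper bound is an explicit construction in which $A$ plays the root and $B$ is added last.

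For the lower bound I would suppose, for a contradiction, that $G$ has a construction order of type $\omega$, so that every vertex occupies a finite position. Each copy $K_i$ satisfies the hypotheses of Lemma~\ref{help}: the only edges leaving $K_i$ are $A\sim x_i$, $A\sim y_i$ and $B\sim x_i$, all with $K_i$-end at $x_i$ or $y_i$. Hence the parent of $x_i$ must be $y_i$. The crucial observation is that this forces $B$ to be added \emph{after} $x_i$: since $B\sim x_i$ while $B\not\sim y_i$ and $B\ne y_i$, if $B$ were already present at the moment $x_i$ is added then $y_i$ would fail to dominate $x_i$. As this holds for every $i$, the vertex $B$ must come after all of the infinitely many vertices $x_1,x_2,\ldots$, which is impossible once $B$ occupies a finite position. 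So no construction order of type $\omega$ exists, and as $G$ is infinite its rank is at least $\omega+1$.

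For the upper bound I would exhibit a construction order of type exactly $\omega+1$. Take $A$ as the root. In the first $\omega$ steps I build each $K_i$ in turn (interleaved in any way producing order type $\omega$), and at position $\omega$ I add $B$. To build a single $K_i$ with $A$ already present and $B$ still absent, I add $y_i$ first, dominated by $A$ — at that moment the only present neighbour of $y_i$ is $A$ — and then add the remaining vertices of $K_i$, finishing with $x_i$, whose parent is $y_i$; this last step is legitimate precisely because $B$ is not yet present and $A\sim y_i$. Finally $B$ is dominated by $A$, since in the full graph $N[B]=\{A,B\}\cup\{x_i:i\}\subseteq\{A,B\}\cup\{x_i,y_i:i\}=N[A]$. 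This yields a well-ordering of type $\omega+1$, whence $\mathrm{rank}(G)\le\omega+1$, and together with the lower bound the rank is exactly $\omega+1$.

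The hard part is the claim buried in the upper bound that each $K_i$ can really be constructed starting from $y_i$: the dismantling of $K$ recorded in Section~2 ends at $z$, not at $y$, so a different order is needed. I would check directly from the structure of $K$ that $K$ also dismantles to the single vertex $y$ — remove $x$ first (dominated by $y$), then $w$, and then $t,t',z',z$ in turn — equivalently that $K$ has a construction order with root $y$ and $x$ last. I must also confirm that interposing $A$ causes no disruption: since $A$ is adjacent, within $K_i$, only to $x_i$ and $y_i$, it affects the domination of no other vertex of $K_i$, and for $x_i$ its parent $y_i$ is itself adjacent to $A$, so every internal domination relation of the standalone construction survives in the ambient graph. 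These are finite verifications, after which the construction is valid and the theorem follows.
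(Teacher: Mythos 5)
Your overall strategy is exactly the paper's: the lower bound via Lemma~\ref{help} (forcing the parent of each $x_i$ to be $y_i$, hence forcing $B$ after every $x_i$, which is impossible at a finite position) and the upper bound via an explicit order rooted at $A$ that builds each $K_i$ from $y_i$ and adds $B$ at step $\omega$. Both halves are structurally sound, and you correctly identified the one point that needs a genuine check, namely that $K$ admits a construction order with root $y$ and $x$ last.

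However, the explicit order you offer for that check is wrong. After removing $x$, the vertex $w$ is \emph{not} dominated: its closed neighbourhood is $\{w,z,z',t,t'\}$, and the only vertex adjacent to all of $z,z',t,t'$ is $y$, which is not adjacent to $w$ (so $N[w]\not\subseteq N[y]$), while $z$ misses $t'$, $z'$ misses $t$, and $t,t'$ miss each other's side. So the dismantling order ``$x$, then $w$, then $t,t',z',z$'' fails at its second step; equivalently, in a construction order $w$ must be added \emph{before} all four of $z,z',t,t'$ are present, not after. The claim itself is true and the repair is immediate: dismantle in the order $x, t, t', w, z', z$ (each step now has a dominating vertex: $y, z, z', z, z, y$ respectively), which reversed gives the paper's construction order $y, z, z', w, t, t', x$. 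With that substitution your proof is complete and coincides with the paper's.
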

\begin{proof}
To see that $G$ is constructible in time $\omega+1$ we begin with $A$, then add each copy of $K$ in turn in the following construction order: first $y$, then $z$ and $z'$
(both with parent $y$), then $w$ and $t$ with parent $z$ and $t'$ with parent $z'$, and finally $x$ with parent $y$. This is valid even with $A$ already present, since $x$ has
parent $y$ and both of these vertices in a copy of $K$ are joined to $A$.\\Finally, after doing all the above we add $B$ with parent $A$:  this is allowed since all neighbours of $B$ are also neighbours of $A$ (and $B$ and $A$ are adjacent).\par On the other hand, we cannot construct $G$ in time $\omega$. Indeed, suppose for a contradiction that there is a way to construct the graph in time $\omega$. This implies that the vertex $B$ must be constructed at some time $t$, where $t$ is a natural number. Since $t$ is finite, at time $t$ we must have some copy of $K$ with no vertices constructed yet. Let $K_i$ be such a copy. By Lemma \ref{help}, $x_i$ must be the last vertex in $K_i$ to be added, and its parent must be $y_i$. But this is impossible since $B$ is already present, and $B$ is a neighbour of $x_i$ while $y_i$ is not.
\end{proof}
\par The above result tells us that any ordinal less or equal than $\omega+1$ can be the construction time of some graph. We now prove that any countable ordinal can be achieved. We need the following simple lemma.
\begin{lemma}
Let $\alpha$ be a (non-zero) countable limit ordinal. Then there are pairwise disjoint subsets $S_i$ of $\alpha$ of order type $\alpha_i$ for all $i$, where $\alpha_1, \alpha_2\cdots$ are the ordinals less than $\alpha$.
\label{omegabeta}
\end{lemma}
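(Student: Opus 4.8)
The plan is to reduce the statement to a single clean sub-problem: partitioning $\alpha$ into countably many pairwise disjoint subsets, each of order type $\alpha$. Granting this, the lemma follows immediately. Enumerate the ordinals below $\alpha$ as $\alpha_1,\alpha_2,\dots$ (there are countably many, as $\alpha$ is countable), let $A_1,A_2,\dots$ be such a partition of $\alpha$ into pieces each of order type $\alpha$, and for each $i$ take $S_i$ to be the initial segment of $A_i$ (in the order induced from $\alpha$) of order type $\alpha_i$; this exists since $\alpha_i<\alpha=\operatorname{otp}(A_i)$. The sets $S_i\subseteq A_i$ are automatically pairwise disjoint, and we simply discard the unused tails $A_i\setminus S_i$, so no delicate bookkeeping about the order type of any remainder is needed.

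So the real content is the sub-problem, which I would prove by first reducing to the additively indecomposable case. Writing $\alpha$ in its decomposition into indecomposables $\alpha=\gamma_1+\gamma_2+\cdots+\gamma_r$ (non-increasing, with $\gamma_r=\omega^{\xi}$ a limit ordinal since $\alpha$ is a limit), I would partition each consecutive block $[\mu_{i-1},\mu_i)$ of order type $\gamma_i$ into $\omega$ subsets each of order type $\gamma_i$, and then \emph{diagonalise}: let $A_j$ be the union, over the $r$ blocks, of the $j$th piece of each block. Since these pieces lie in increasing blocks, $\operatorname{otp}(A_j)=\gamma_1+\cdots+\gamma_r=\alpha$, and the $A_j$ clearly partition $\alpha$. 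This reduces everything to showing that an indecomposable $\gamma=\omega^{\xi}$ can be split into $\omega$ subsets each of order type $\gamma$.

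For the indecomposable case I would exploit the fact that indecomposable ordinals are closed under addition, so $\sigma<\gamma$ implies $\sigma\cdot 2<\gamma$. Using $\operatorname{cf}(\gamma)=\omega$, I would choose a cofinal increasing sequence $0=\sigma_0<\sigma_1<\cdots\to\gamma$ with the extra property $\sigma_{n+1}>\sigma_n\cdot 2$; closure under addition is exactly what makes this achievable while keeping every $\sigma_n$ below $\gamma$. The gap types $\tau_n=\operatorname{otp}[\sigma_n,\sigma_{n+1})$ then satisfy $\tau_n>\sigma_n$, so they are cofinal in $\gamma$. Now split the index set $\omega$ into countably many infinite pieces $N_j$; each $A_j=\bigcup_{n\in N_j}[\sigma_n,\sigma_{n+1})$ has order type $\sum_{n\in N_j}\tau_n$, which is at least $\sup_{n\in N_j}\tau_n=\gamma$ (an infinite index set reaches cofinally high gaps) and at most $\gamma$ (a subset of a well-order has no larger order type), hence exactly $\gamma$.

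The step I expect to be the main obstacle is precisely this reduction to the indecomposable case. The tempting direct approach --- pick any cofinal sequence in $\alpha$ and distribute its gaps among the bins --- fails for decomposable limit ordinals such as $\alpha=\omega\cdot 2$: there the gaps can never be made cofinal in $\alpha$ while staying below it (any $\sigma\ge\omega$ already has $\sigma\cdot 2=\omega\cdot 2$), so the bins come out with order type $\omega$ rather than $\omega\cdot 2$. Isolating the indecomposable summands, where the doubling trick \emph{does} stay below $\gamma$, and then reassembling by diagonalisation is what circumvents this; the remaining verifications (existence of initial segments of the right order types, and that the subsums of gap types equal $\gamma$) are routine.
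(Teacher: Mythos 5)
Your proof is correct, but it takes a genuinely different route from the paper's. Both arguments share the same reduction: partition $\alpha$ into countably many pieces each of order type $\alpha$, then take initial segments of the right order types. Where you differ is in producing that partition. The paper observes that every limit ordinal has the form $\alpha=\omega\cdot\beta$, splits $\omega$ into infinitely many disjoint copies of itself, and lifts this splitting through the product (taking the $j$th copy inside each of the $\beta$ blocks of $\omega$), which gives the disjoint copies of $\alpha$ in three lines. You instead pass through the decomposition of $\alpha$ into additively indecomposable summands, handle each indecomposable $\omega^{\xi}$ by a cofinal sequence whose gaps grow (using closure under addition to keep $\sigma_n\cdot 2<\gamma$), and reassemble by diagonalising across the finitely many blocks; all of these steps check out, including the key inequalities $\tau_n>\sigma_n$ (by left cancellation from $\sigma_{n+1}>\sigma_n+\sigma_n$) and $\sum_{n\in N_j}\tau_n\ge\sup_{n\in N_j}\tau_n=\gamma$. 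The paper's approach buys brevity at the cost of invoking the characterisation of limit ordinals as $\omega\cdot\beta$ and a slightly hand-wavy ``it follows that''; yours is longer and needs Cantor normal form, but the indecomposable case is entirely self-contained and your diagnosis of why the naive cofinal-gaps argument fails for decomposable limits such as $\omega\cdot 2$ is exactly right and well worth articulating. One small point you leave implicit: you should note that \emph{every} summand $\gamma_i$ in the non-increasing decomposition satisfies $\gamma_i\ge\gamma_r\ge\omega$, so each is an infinite indecomposable and the block-splitting applies to all of them, not just the last.
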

\begin{proof}
Since $\alpha$ is a limit, we know that $\alpha=\omega\cdot\beta$ for some ordinal $\beta$. Since $\omega$ contains infinitely many disjoint copies of itself, it follows that $\omega\cdot\beta$ contains infinitely many disjoint copies of $\omega\cdot\beta$ too. We conclude that $\alpha$ contains infinitely many disjoint sets of order type $\alpha$.  
Let $Q_1, Q_2, \cdots$ be such a collection. Since $Q_i$ has order type $\alpha$, it has an initial segment $S_i$ of order type $\alpha_i$, as required.
\end{proof}

The following is the key result.
\begin{lemma}
Let $\lambda$ be an ordinal of the form $\lambda=\alpha+6n+1$ where $n$ is a non-negative integer and $\alpha$ is a (possibly zero) countable limit ordinal. Then there exists a constructible graph $G_{\lambda}$ with construction time $\lambda$. Moreover, $G_{\lambda}$ has two vertices, $A_{\lambda}$ and $B_{\lambda}$, such that in any construction order $B_{\lambda}$ must be added last, and there exists a construction order of time $\lambda$ that starts with $A_{\lambda}$. Furthermore, $A_{\lambda}$ is joined to $B_{\lambda}$ and, provided $n\ge 1$, $B_{\lambda}$ is not dominated by $A_{\lambda}$.
\label{ordinals}
\end{lemma}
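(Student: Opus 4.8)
The plan is to build $G_\lambda$ by transfinite recursion on $\lambda$, following the pattern already established in Theorem~\ref{t:omega_plus_1}. The base cases are the finite ordinals $\lambda = 6n+1$ (i.e. $\alpha = 0$): here I would take a graph assembled from $n$ copies of $K$ together with designated vertices $A_\lambda$ and $B_\lambda$, chained so that each copy of $K$ forces six steps of construction (matching the $6$ in $6n+1$) and $B_\lambda$ is forced to come last. The key structural device, exactly as in Theorem~\ref{t:omega_plus_1}, is to arrange the edges from $B_\lambda$ into each copy of $K$ so that $B_\lambda$ is adjacent to $x_i$ but not $y_i$; by Lemma~\ref{help}, $x_i$ must be added last in $K_i$ with parent $y_i$, and since $y_i \not\sim B_\lambda$ this is impossible once $B_\lambda$ is present. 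This is precisely what forces $B_\lambda$ to be the final vertex, and what prevents a shorter construction. The condition that $B_\lambda$ is not dominated by $A_\lambda$ when $n \geq 1$ should follow by checking that some neighbour of $B_\lambda$ (a vertex $x_i$) is not a neighbour of $A_\lambda$.

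For the limit-stage part of the recursion I would handle $\lambda = \alpha + 6n+1$ with $\alpha$ a nonzero limit ordinal. Here I would first treat the pure limit case $\lambda = \alpha + 1$ and then prepend the finite $6n$-chain of $K$'s as above. The heart of the construction is Lemma~\ref{omegabeta}: decompose $\alpha$ into pairwise disjoint subsets $S_i$ of order types $\alpha_i$ ranging over all ordinals below $\alpha$. For each $i$ I would take a copy $G_{\alpha_i}$ of the (already-constructed, by induction) graph of rank $\alpha_i$, and glue all these copies together through the distinguished vertices $A$ and $B$, in the same spirit as the $\omega+1$ construction: identify or connect the $A_{\alpha_i}$'s to a common vertex $A_\lambda$, and arrange $B_\lambda$'s adjacencies so that it is forced to come after every $B_{\alpha_i}$. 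Because each $G_{\alpha_i}$ requires time at least $\alpha_i$ to build and its $B_{\alpha_i}$ comes last, and because $B_\lambda$ must follow all of them, no construction of order type less than $\sup_i \alpha_i = \alpha$ can finish; adding $B_\lambda$ at the very end gives exactly $\alpha+1$, and the finite $K$-chain then pushes this to $\alpha + 6n+1$.

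The two things to verify carefully are the upper bound and the lower bound on the rank. For the upper bound I would exhibit an explicit construction order of type $\lambda$: construct the $G_{\alpha_i}$ copies one after another (each in its own optimal time, arranged so their order types sum correctly to $\alpha$ using the $S_i$ decomposition), starting each from its $A_{\alpha_i}$, then build the finite $K$-chain, and finally add $B_\lambda$ with parent $A_\lambda$ — legitimate exactly because all neighbours of $B_\lambda$ are neighbours of $A_\lambda$ at that point. For the lower bound, the argument is the Lemma~\ref{help}-driven obstruction: in any construction order, $B_\lambda$ forces each forcing-gadget to be completed, and I must show the induced ordinal arithmetic genuinely yields least order type $\lambda$ and not something smaller. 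I expect the main obstacle to be bookkeeping the ordinal arithmetic cleanly — ensuring that the disjoint segments $S_i$ assemble into a construction order whose order type is exactly $\alpha$ (rather than merely $\le \alpha$ or some larger ordinal), and simultaneously that no reordering beats it. Managing the interaction between the inductive hypothesis "$B_{\alpha_i}$ comes last in $G_{\alpha_i}$" and the global requirement "$B_\lambda$ comes last in $G_\lambda$" across infinitely many glued copies, while keeping the whole well-ordered, is where the delicacy lies.
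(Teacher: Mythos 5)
Your plan matches the paper's proof essentially step for step: the same $K$-gadget successor step (adding a copy of $K$ that contributes six new vertices, with Lemma~\ref{help} forcing the new $B$ to come last), the same limit-stage gluing of copies $G_{\alpha_i}$ at a common $A_\lambda$ with a new $B_\lambda$ joined to $A_\lambda$ and all the $B_{\alpha_i}$, and the same two-sided rank argument (Lemma~\ref{omegabeta} interleaving for the upper bound, and `each $G_\beta$ must be fully built before its $B_\beta$' for the lower bound). The one detail to pin down is exactly the interaction you flagged: at a limit stage one should glue only copies $G_\beta$ with $\beta=\gamma+6m+1$ and $m\ge 1$ (these are still cofinal in $\alpha$), precisely so that the inductive hypothesis that $A_\beta$ does not dominate $B_\beta$ is available to rule out any $B_\beta$ being the last vertex added in place of $B_\lambda$.
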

\begin{proof} We proceed by induction. First we note that if we have found $G_{\alpha+6n+1}$ with the above properties, except possibly for the condition that $A_{\alpha+6n+1}$ does not dominate $B_{\alpha+6n+1},$ then we can find such graph for $\alpha+6(n+1)+1$ by adding a disjoint copy of $K$, identifying $B_{\alpha+6n+1}$ with the $y$ of this copy and joining $x$ to $A_{\lambda+6n+1}$. We set $B_{\alpha+6(n+1)+1}$ to be the $x$ of the $K$ copy and $A_{\alpha+6(n+1)+1}=A_{\alpha+6n+1}$. By using Lemma \ref{help} it is easy to check that all properties are satisfied. Moreover, by Lemma \ref{help} again, $B_{\alpha+6(n+1)+1}$ is not dominated by $A_{\alpha+6(n+1)+1}$.

To start with, the one-point graph satisfies the conditions for $\lambda=1$.
So to finish the proof we have to show that such graphs exist for all $\lambda=\alpha+1$ where $\alpha$ is a countable non-zero limit ordinal. So let $\alpha\geq\omega$ be a countable non-zero limit.

By induction we may assume that such graphs exist for all ordinals $\beta<\lambda=\alpha+1$ of the form $\beta = \gamma+6m+1$, where $\gamma$ is a limit ordinal and $m\ge 1$ is a positive integer. To obtain $G_{\lambda}$ we take a copy of each $G_{\beta}$ and identify all the points $A_{\beta}$ to a single vertex, which is our new $A_{\lambda}$. We also add a new vertex $B_{\lambda}$ which we join to $A_{\lambda}$ and all the vertices $B_{\beta}$. 

To see that $B_{\lambda}$ has to come last in any construction ordering, suppose that $v\not=B_\lambda$ is the last vertex added. By the induction hypothesis, $v$ has to be one of the vertices $B_{\delta}$ for $\delta<\lambda$. This vertex must be dominated by a neighbour of $B_{\lambda}$ (since they are joined), or by $B_{\lambda}$ itself. The other $B_{\beta}$ vertices are not joined to $B_{\delta}$ so they cannot dominate it. Also, by the induction hypothesis we know that $A_{\delta}$ does not dominate $B_{\delta}$, and so$A_{\lambda}$ does not dominate $B_{{\delta}}$ either. Finally, since the neighbours of $B_\lambda$ are a subset of the neighbours of $A_\lambda$, $B_\lambda$  cannot dominate $B_{\delta}$. This is a contradiction. So indeed  $B_{\lambda}$ must come last.

It is clear that the construction time of $G_\lambda\setminus\{B_{\lambda}\}$ is at least $\alpha$ because, when a $B_{\beta}$ for some $\beta<\alpha$ is added, the entire $G_{\beta}$ has to be constructed, which must take time at least $\beta$. Since $B_\lambda$ comes at the very end, $G_\lambda$ has construction time at least $\alpha+1=\lambda$.

To see that $G_{\lambda}\setminus\{B_{\lambda}\}$ has construction time at most 
$\alpha$, we use Lemma \ref{omegabeta}. Indeed, let $S_i$ be disjoint subsets of $\alpha$ of order type $\alpha_i$. We view the union of the $S_i$ as our (well-ordered) set of construction times, and at each time in $S_i$ we construct the corresponding vertex of $G_{\alpha_i}$.
 This gives a construction of time at most $\alpha$. Adding $B_\lambda$ after this takes one more step. We conclude that $G_{\lambda}$ has indeed construction time at most $\lambda$,as required. The other properties are straightforward to check.
\end{proof}
We remark that, alternatively, we could have started the induction at $\omega+1$, using the graph in Theorem~\ref{omega}.

\par We are now ready to prove the following.
\begin{theorem} For every countable ordinal $\lambda>0$, there exists a constructible graph with construction time $\lambda$.
\end{theorem}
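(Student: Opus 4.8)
The plan is to bootstrap off Lemma~\ref{ordinals}, which already carries out all the genuinely transfinite work: for every limit-or-zero countable ordinal $\mu$ it supplies, taking $n=0$ in $\mu+1=\mu+6\cdot 0+1$, a graph $G_{\mu+1}$ of construction time $\mu+1$ with a distinguished vertex $B_{\mu+1}$ that is forced to be added last in every construction order. All that then remains is to adjust the finite part of the target ordinal, which I would handle with a simple pendant-path gadget. To set up the reduction, write the given $\lambda>0$ in its standard (unique) form $\lambda=\mu+k$, where $\mu$ is either $0$ or a countable limit ordinal and $k<\omega$. I treat the limit case $k=0$ and the successor case $k\ge 1$ separately.

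First I would treat the limit case, $k=0$, so $\lambda=\mu\ge\omega$. Here I claim that $G_{\mu+1}\setminus\{B_{\mu+1}\}$ has construction time exactly $\mu$. Since $B_{\mu+1}$ is last in every construction order of $G_{\mu+1}$, every such order restricts to a construction order of $G_{\mu+1}\setminus\{B_{\mu+1}\}$ followed by the single vertex $B_{\mu+1}$; taking minima over all orders gives $\operatorname{time}(G_{\mu+1})=\operatorname{time}(G_{\mu+1}\setminus\{B_{\mu+1}\})+1$. As the left-hand side is $\mu+1$, the subgraph has construction time exactly $\mu=\lambda$, as required.

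For the successor case $k\ge 1$ I would take $G_{\mu+1}$ and attach a pendant path of length $k-1$ to the vertex $B_{\mu+1}$, calling the new vertices $u_1,\dots,u_{k-1}$ with $B_{\mu+1}\sim u_1\sim\cdots\sim u_{k-1}$. The upper bound is immediate: construct $G_{\mu+1}$ in time $\mu+1$ (ending at $B_{\mu+1}$) and then add $u_1,\dots,u_{k-1}$ in turn, each dominated by its predecessor, giving time $\mu+1+(k-1)=\mu+k=\lambda$. Note that this construction also covers the purely finite case $\mu=0$ (where $G_1$ is the single vertex and the graph is just the path on $k$ vertices), so no separate treatment of finite $\lambda$ is needed.

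The part needing care is the matching lower bound, and this is where the forced-last property earns its keep. The main points I would verify are: (i) none of the new vertices $u_i$ dominates any vertex of $G_{\mu+1}$, since their closed neighbourhoods are too small, so the restriction of any construction order to $V(G_{\mu+1})$ is itself a valid construction order of $G_{\mu+1}$, hence of type $\ge\mu+1$ and having $B_{\mu+1}$ as its maximum; and (ii) each $u_i$ can only be dominated by $u_{i-1}$ (with $u_0:=B_{\mu+1}$), never by $u_{i+1}$, so in any construction order $B_{\mu+1}\prec u_1\prec\cdots\prec u_{k-1}$ and moreover all the $u_i$ come after every vertex of $G_{\mu+1}$. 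Combining these, any construction order is an initial segment constructing $G_{\mu+1}\setminus\{B_{\mu+1}\}$ (of type $\ge\mu$) followed by $B_{\mu+1},u_1,\dots,u_{k-1}$, so its order type is at least $\mu+k$. Thus the construction time is exactly $\lambda$, finishing all cases. The only real obstacle is the bookkeeping in (i)--(ii) — confirming that the pendant path cannot be interleaved into the body of $G_{\mu+1}$ to shortcut the construction — which is precisely what the ``$B_{\mu+1}$ is forced last'' clause of Lemma~\ref{ordinals} rules out.
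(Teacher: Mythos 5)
Your proof is correct and follows essentially the same route as the paper: the paper likewise handles limit ordinals by deleting $B_{\lambda+1}$ from $G_{\lambda+1}$ and successors by attaching a pendant path to $B$ (it attaches paths of at most $5$ vertices to the graphs $G_{\alpha+6n+1}$ from Lemma~\ref{ordinals}, whereas you attach arbitrarily long finite paths to $G_{\mu+1}$ alone --- an immaterial difference). One small caveat: your claim (ii) that $u_{i+1}$ can never dominate $u_i$ is not literally true in an induced subgraph from which $u_{i-1}$ is absent, but the forced-last property you invoke does rule out the pendant path being built from its far end inward (the root cannot lie on the path, since the construction would then have to enter $G_{\mu+1}$ through $B_{\mu+1}$, making $B_{\mu+1}$ the first rather than the last vertex of $G_{\mu+1}$ constructed), so your conclusion $B_{\mu+1}\prec u_1\prec\cdots\prec u_{k-1}$ stands.
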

\begin{proof} We know that for every positive integer $n$ a finite path with $n$ vertices, or indeed any constructible graph on $n$ vertices, gives us construction time $n$, and an infinite ray gives us $\omega$. From Lemma \ref{ordinals} we have that such graphs exist for all the ordinals of the form $\alpha+6n+1$ where $\alpha$ is a countable non-zero limit ordinal and $n$ is a non-negative integer. Therefore we are left to show that such graphs exist for non-zero countable limit ordinals and for ordinals of the form $\alpha+6n+i$ where $i\in\{2,3,4,5,6\}$ and $\alpha$ is a countable non-zero limit.

Suppose $\lambda=\alpha+6n+i$ where $\alpha$ is a countable non-zero limit, $n$ a non-negative integer and $2 \leq i \leq 6$. We take the graph $G_{\alpha+6n+1}$ constructed in Lemma \ref{ordinals} and add say a path of $i-1$ vertices attached to the vertex $B_{\alpha+6n+1}$.

Finally, suppose $\lambda$ is a countable non-zero limit ordinal. In this case we note that the graph $G_{\lambda+1}\setminus\{B_{\lambda+1}\}$ is a suitable choice.
\end{proof}

\section{Open Problems}
The obvious open problem is to classify which graphs are weak-cop wins.
\begin{question}
Which graphs are weak cop wins?
\end{question}
There is also the question of which graphs are actual cop wins. However,
as there are so many constructible graphs that are not cop wins (e.g. $\mathbb{Z}$), 
and as we have seen there is a graph that is a cop win and not constructible, 
a structural classification is very open.

There are also even weaker notions of win that we could consider: for example, we could view it as a win for the cop if he can force the robber to leave (but possibly return to) any finite set.
\begin{question}
Which graphs have the property that the cop has a strategy that ensures that, given any finite set of vertices, the robber must leave this set at some point (although he may return to this set later), or get caught?  
\end{question}
Note that if there is a such a strategy for
each individual finite set then, by concatenating these (necessarily finite time)
strategies, we do 
obtain a single strategy that works for all finite sets.
Obviously any locally constructible graph has this property, but we do not know whether the converse holds. The example of a graph that is locally constructible but not a weak cop win does show that this is strictly weaker notion than that of a weak cop win.

Finally, we have seen that there are graphs where the robber can avoid being trapped in one end of the graph (recall the doubly infinite chain of copies of $K$ described at
the end of Section 3). In particular, that graph is a weak cop win in which the robber can return to a specified vertex an arbitrarily long time after he first visited it. However, we do not know the answer to the following question.
\begin{question}
Is there a graph $G$ which is a weak cop win but such that the robber can guarantee to revisit his initial vertex $v$ after an arbitrarily long time, and then guarantee to
revisit $v$ again after another arbitrarily long time?
\end{question}
More precisely, for each cop starting position the robber has a starting position
$v$ such that, for every pair of positive integers $m$ and $n$, the robber has a strategy that ensures that he does not get caught
and either he stays in some finite set forever or he returns to $v$ at some time $t \geq m$ and also at some time $s \geq t+n$.

\Addresses

\begin{thebibliography}{99}
\bibitem{BN} A. Bonato and R. Nowakowski, The Game of Cops and Robbers on Graphs. \textit{American Mathematical Society} (2011), ISBN-13 978-0821853474.
\bibitem{BGHK} A. Bonato, P. Golovach, G. Hahn and J. Kratochv{\'\i}l, The capture time of a graph. \textit{Discrete Mathematics}, \textbf{309} (2009), 5588--5595.
\bibitem{CLP} M. Chastand, F. Laviolette and N. Polat, On constructible graphs, infinite bridged graphs and weakly cop-win graphs. \textit{Discrete Mathematics}, \textbf{224} (2000), 61--78.
\bibitem{RD} R. Diestel, Graph Theory. \textit{Springer-Verlag, Heidelberg
Graduate Texts in Mathematics}, \textbf{173} (2016/17).
\bibitem{ESS} L. Evron, R. Solomon and R. Stahl, Dominating orders, vertex pursuit games and computability theory. \url{https://www2.math.uconn.edu/~solomon/research/PursuitDraft.pdf} (2021).
\bibitem{HLSW} G. Hahn, F. Laviolette, N. Sauer and R. Woodrow, On cop-win graphs. \textit{Discrete Mathematics}, \textbf{258} (2002), 27--41.
\bibitem{IKK} V. Isler, S. Kannan and S. Khanna, Randomized pursuit-evasion with limited visibility. \textit{Technical Reports (CIS)}, (2003).
\bibitem{AK} A. Kechris, Classical descriptive set theory. \textit{Springer-Verlag, Graduate Texts in Mathematics}, \textbf{156} (2012).
\bibitem{L} F. Lehner, Pursuit evasion on infinite graphs. \textit{Theoretical Computer Science}, \textbf{655} (2016), 30--40.
\bibitem{NW} R. Nowakowski and P. Winkler, Vertex-to-vertex pursuit in a graph. \textit{Discrete Mathematics}, \textbf{43} (1983), 235--239.
\bibitem{Polat1} N. Polat, Retract-collapsible graphs and invariant subgraph properties. \textit{Journal of Graph Theory}, \textbf{19} (1995), 25--44.
\bibitem{Polat2} N. Polat, On infinite bridged graphs and strongly dismantlable graphs. \textit{Discrete Mathematics}, \textbf{211} (2000), 153--166.
\bibitem{Polat3} N. Polat, On constructible graphs, locally Helly graphs, and convexity. \textit{Journal of Graph Theory}, \textbf{43} (2003), 280--298.
\bibitem{S} R. Stahl, Computability and the game of cops and robbers on graphs. \textit{Archive for Mathematical Logic}, (2021), 1--25.
\end{thebibliography}
\end{document}